\documentclass[11pt]{amsart}

\usepackage{amsmath,amscd,amssymb,latexsym,color}
\usepackage{longtable}
\usepackage{systeme}
\sysdelim..

\usepackage[makeroom]{cancel}

\oddsidemargin = 0cm \evensidemargin = 0cm \textwidth = 16cm

\newcommand{\sm}{\left(\smallmatrix}
\newcommand{\esm}{\endsmallmatrix\right)}
\newcommand{\mat}{\begin{pmatrix}}
\newcommand{\emat}{\end{pmatrix}}
\renewcommand{\c}{\mathfrak{c}}
\newcommand{\f}{\mathfrak{f}}
\newcommand{\fg}{\mathfrak{g}}
\renewcommand{\t}{\tau}

\renewcommand{\a}{\alpha}

\renewcommand{\i}{\infty}
\newcommand{\G}{\Gamma}
\newcommand{\g}{\gamma}
\newcommand{\ve}{\varepsilon}

\newcommand{\Q}{\mathbb Q}
\newcommand{\Z}{\mathbb Z}

\newcommand{\R}{\mathbb R}

\renewcommand{\H}{\mathbb H}

\newcommand{\F}{\mathcal F}

\newtheorem{thm}{Theorem}
\newtheorem{lem}[thm]{Lemma}
\newtheorem{cor}[thm]{Corollary}
\newtheorem{prop}[thm]{Proposition}
\newtheorem{exam}[thm]{Example}

\theoremstyle{definition}

\newtheorem{rmk}[thm]{Remark}
\numberwithin{equation}{section}
\numberwithin{thm}{section}

\begin{document}

\title[Values at Divisors]{On values of weakly holomorphic modular functions\\ at divisors of meromorphic modular forms}

\author{Daeyeol Jeon, Soon-Yi Kang and Chang Heon Kim}
\address{Department of Mathematics Education, Kongju National University, Kongju, 32588 Korea}
\email{dyjeon@kongju.ac.kr}
\address{Department of Mathematics, Kangwon National University, Chuncheon, 24341 Korea} \email{sy2kang@kangwon.ac.kr}
\address{Department of Mathematics, Sungkyunkwan University,
 Suwon, 16419 Korea}
\email{chhkim@skku.edu}

\begin{abstract} We show that the values of a certain family of weakly holomorphic modular functions at points in the divisors of any meromorphic modular form  with algebraic Fourier coefficients are algebraic. We use this to extend the classical result of Schneider by proving that zeros or poles of any non-zero meromorphic modular form  with algebraic Fourier coefficients are either transcendental or imaginary quadratic irrational. 
\end{abstract}
\maketitle

\renewcommand{\thefootnote}%
             {}
 {\footnotetext{
 2010 {\it Mathematics Subject Classification}: 11F03, 11F12, 11F25,  11J81, 11J91
 \par
 {\it Keywords}: Schneider's theorem; algebraic values of modular forms; transcendence of divisors of modular forms; imaginary quadratic irrational; Eisenstein space

The first author was supported by National Research Foundation of Korea (NRF)
funded by the Ministry of Education (NRF-2016R1D1A1B03934504), the second author
was supported by the National Research Foundation of Korea (NRF) 
funded by the Ministry of Education (NRF-2016R1D1A1B01012258) 
and the third author was supported by National Research
Foundation of Korea (NRF) funded by the Ministry of Education (NRF-2021R1A2C1003998).
}

\section{Introduction}

Throughout the paper, we call a modular form, which is holomorphic away from the infinity, weakly holomorphic and write $M_k^\sharp(N)$ to denote the space of the infinite dimensional complex vector space of weakly holomorphic modular forms of weight $k$ for $\G_0(N)$.  Then the modular $j$-function defined for $\t$ in the complex upper half-plane $\H$ by
$$j(\t):=q^{-1}+744+196884q+21493760q^2+\cdots,\quad (q=e^{2\pi i \t})$$
 is a weakly holomorphic modular function on the full modular group $\G(1)=SL_2(\mathbb Z)$. It follows from the theory of complex multiplication that if $\t$ is an imaginary quadratic irrational number, then $j(\t)$ is an algebraic integer, known as {\it singular moduli}.  In 1937, Schneider \cite{Schneider} showed that the values of the $j$-function at algebraic number of degree                                                                                                                                                                                                                                             larger than 2 are transcendental.

 For the normalized Hecke operator $T(n)$, the weakly holomorphic modular functions given by
\begin{equation}\label{hj}J_{n}(\t):=(j(\t)-744)|T(n), \quad (n\geq 1)
\end{equation}
and $J_0(\t):=1$ form a basis of $M_0^\sharp(1)$.
 In \cite[Corollary 2]{BKO}, Bruinier, Kohnen and Ono proved that the sum of the values of $J_n$ at the points in the divisor of an algebraic modular form on $\G(1)$ are algebraic, and also the values of $j$ at those points are algebraic. Therefore, the result of Schneider leads us to conclude that zeros and poles of an algebraic modular form of level 1 are transcendental or imaginary quadratic irrational. The arguments in \cite{BKO} is constructive in that it proposes a method for obtaining the minimal polynomials of the algebraic values of $j$. Gun, Murty and Rath \cite[Theorem 6]{GMR} later showed that zeros of an algebraic modular form of any level are transcendental or imaginary quadratic irrational. These are great generalizations of the results on zeros of an algebraic modular form of level 1 that we recall below.

In \cite{RSD}, Rankin and Swinnerton-Dyer proved that all the zeros of the weight $k$ Eisenstein series $E_k(\t)$ ($k\geq 4$ even) are simple and  lie on the unit circle, the lower left arc of the standard fundamental domain $\F$ for the action of $\G(1)$ on $\H$. 
Using this elegant result of Rankin and Swinnerton-Dyer along with the result of Schneider and the theory of complex multiplication, Kohnen \cite{Kohnen} proved that the zeros of both $E_k(\t)$ and $J_n(\t)$ in $\F$, other than $i$ or $\rho:=e^{2\pi i/3}$, are transcendental. 

Duke and Jenkins \cite{DJ} also studied the location of zeros of a family of weakly holomorphic modular forms for $\G(1)$.  For even weight $k=12\ell+k'$, where $k'\in\{0,4,6,8,10,14\}$, they constructed a basis $$\{f_{k,m}=q^{-m}+O(q^{\ell+1})|m\geq -\ell\}$$ of $M_k^\sharp(1)$ and showed that when $m\geq|\ell|-\ell$, the zeros of $f_{k,m}$ all lie on the unit circle. Shortly thereafter, applying Kohnen's method, Jennings-Shaffer and Swisher \cite{JSS} proved that these zeros, other than $i$ and $\rho$, are  transcendental.

In this paper, we investigate the algebraic nature of divisors of an algebraic modular form on $\G_0(N)$ for any given level $N$. 
In \cite{JKK-hecke}, the authors constructed a {\it reduced row echelon basis} for $M_0^\sharp(N)$.  Assume, for simplicity, $\i$ is not a Weierstrass point of $X_0(N)$. Then if
 $m\geq g+1$, when $g:=g(N)$ is the genus of $X_0(N)$ with $g>0$, there is always a weakly holomorphic modular function that has only pole at $\i$ of order $m$ by the Weierstrass gap theorem.  By performing Gauss elimination on the coefficients of these functions, we can obtain the unique modular function of the form 
\begin{equation*}\label{mfg}\f_{N,m}(\t)=q^{-m}+\sum_{\ell=1}^{g} a_{N}(m,-\ell)q^{-\ell}+O(q)
\end{equation*}
for each $m\geq g+1$. 
Taking account of Yang's construction of the two generators of the function fields of the modular curves $X_0(N)$ in \cite{Yang}, which can be represented in terms of Dedekind eta-functions or generalized Dedekind eta-functions, we can assure that the coefficients of $\f_{N,m}$ are all algebraic, and most likely rational.
Furthermore, for each $m$, $\f_{N,m}=Q_m(\f_{N,g+1},\f_{N,g+2},\dots,\f_{N,2g+1})$ for some polynomial $Q_m$ in $X_1,\dots,X_{g+1}$ with  algebraic coefficients. %
When $g=0$, obviously, we take $\f_{N,1}$ as the Hauptmodul of $\G_0(N)$ and $\f_{N,m}$ as the Faber polynomial in $\f_{N,1}$.
Suppose the basis elements have the Fourier expansion
\begin{equation}\label{mfg}\mathfrak{f}_{N,m}(\t)=q^{-m}+\sum_{\ell=1}^{g} a_{N}(m,-\ell)q^{-\ell}+\sum_{n=1}^{\i} a_{N}(m,n)q^{n},\ (m\geq g+1).
\end{equation}
In addition, $\f_{N,0}=1$ and $\f_{N,m}=0$ for $1\leq m\leq g$ when the genus $g$ is positive. Of course,  $\f_{1,m}(\t)=J_{m}(\t)$. 
We also constructed a basis for $S_2^{\sharp}(N)$, the subspace of $M_2^{\sharp}(N)$ consisting of forms that vanish at all cusps except for the infinity.  We write for each $n\geq -g$
\begin{equation}\label{gF}
\fg_{N,n}(\t)=q^{-n}+\sum_{m\geq g+1} b_{N}(n,m)q^m. \end{equation}
The Fourier coefficients of $\f_{N,m}(\t)$ and $\fg_{N,n}(\t)$ satisfy the following duality condition (\cite[Theorem 1.12]{JKK-hecke}.) 
\begin{equation}\label{dual}
a_{N}(m,n)=-b_{N}(n,m), \quad (m\geq g+1, n\geq -g).
\end{equation}

Following the method in \cite{BKO}, we show that the values of $\f_{N,m}$ at the points in the divisor of an algebraic modular form on $\G_0(N)$ are algebraic, from which we infer that zeros and poles of an algebraic modular form of arbitrary level $N$ are transcendental or imaginary quadratic irrational. 

 In the next section, we introduce some preliminaries and state our results. Then we prove the main theorems through the following three sections and provide some examples in the last section. One example illustrates how to compute a minimal polynomial of the algebraic values of $\f_{11,2}$ at zeros of a function related with the elliptic genus of $K3$. Another example  shows the values of $\f_{27,n}$ for all $n\geq 2$ at the zeros of the function $\frac{\eta(3\t)^3}{\eta(27\t)^3} +3$, where $\eta$ is the Dedekind eta-function,  belong to the cubic field $\Q(\sqrt[3]{9})$. The last example presents an application of a main theorem to finding an explicit representation of traces of singular moduli of $\f_{N,n}$ for certain $N$'s in terms of exponents appearing in the product representation of a meromorphic modular function. 

\section{Statement of Results}

For even integer $k\geq 4$, the normalized Eisenstein series of weight $k$ 
$$E_k(\t):=1-\frac{2k}{B_k}\sum_{n=1}\sigma_{k-1}(n)q^n$$
is a weight $k$ modular form for $\G(1)$. Here, $B_k$ is the usual $k$th Bernoulli number and $\sigma_{k-1}(n):=\sum_{d|n} d^{k-1}$.   The weight 2 Eisenstein series
$$E_2(\t):=1-24\sum_{n=1}\sigma_{1}(n)q^n$$ 
is not a modular form. However,  
$\displaystyle{E_2(\t)-\frac{3}{\pi \textrm{Im}(\t)}}$ 
transforms like a modular form of weight $2$. 
Among the most important results on Eisenstein series established by Ramanujan are his differential equations 
$$\Theta(E_2)=\frac{1}{12}(E^2_2-E_4),\ 
\Theta(E_4)=\frac{1}{3}(E_2E_4-E_6),\ 
\Theta(E_6)=\frac{1}{2}(E_2E_6-E^2_4),
$$
where $\Theta:=q\frac{dq}{q}=\frac{1}{2\pi i}\frac{d}{d\t}$ is the Ramanujan's differential operator.
It satisfies
$$\Theta\left(\sum_{n=h}^\i a(n)q^n\right)=\sum_{n=h}^\i na(n)q^n.$$
For a meromorphic modular form $f$ of weight $k$ on $\G_0(N)$, $\Theta(f)$ fails to be modular, but the Serre derivative $\displaystyle{\partial_k(f):=\Theta(f)-\frac{kE_2f}{12}}$ preserves modularity with weight raised by $2$.
Hence  $\displaystyle{\frac{\partial_k(f)}{f}}$
is modular of weight 2.  
Bruinier, Kohnen and Ono \cite[Theorem 1]{BKO} proved that if 
\begin{equation}\label{f}
f(\t)=q^h+\sum_{n=h+1}^\i a_f(n)q^n
\end{equation} is a non-zero weight $k$ meromorphic modular form on $\G(1)$, then
\begin{equation}\label{18} \sum_{z\in \F_1}e_{1,z}\hbox{ord}_{z}(f)\sum_{n=0}^\i J_{n}(z) q^n=-\frac{\partial_k(f)}{f}.
\end{equation} 
Here, $\F_N$ is the fundamental domain of $\G_0(N)$ and $1/e_{N,z}$ is the cardinality of $
\G_0(N)_{z}/\{\pm1\}$ for each $z \in \H$, where $\G_0(N)_{z}$ denotes the stabilizer of $z$ in $\G_0(N)$. Note that if we take $f(\t)=j(\t)-j(z)$ for some $z\in\F_1$ in \eqref{18}, then we have the Asai-Kaneko-Ninomiya's identity \cite[Theorem 3]{AKN}:
$$\sum_{n=0}^\i J_{n}(z) q^n=\frac{1728 E_4^2(\t)E_6(\t)}{E_4^3(\t)-E_6^2(\t)}\cdot\frac{1}{j(\t)-j(z)}.$$

Bruinier, Kohnen and Ono \cite[Proposition 2.1]{BKO}  also proved that any meromorphic function in a neighborhood of $q=0$ with the Fourier expansion in \eqref{f} has an infinite product expansion 
\begin{equation}\label{pr}
f=q^h\prod_{n=1}^\i(1-q^n)^{c(n)}
\end{equation}
for uniquely determined complex numbers $c(n)$ and
\begin{equation}\label{divexp}
\frac{\Theta(f)}{f}=h-\sum_{n=1}\sum_{d|n}c(d) dq^n.
\end{equation}
Hence
\begin{equation}\label{serco}
\frac{\partial_k(f)}{f}=h-\frac{k}{12}+\sum_{n=1}^\i (-\sum_{d|n}c(d)d+2k\sigma_1(n))q^n.
\end{equation}
Moreover, they proved \cite[Theorem 5]{BKO} that for each $ n\in\mathbb{N}$, it satisfies that 
\begin{equation}\label{divexp1}
\sum_{z\in \F_1}e_{1,z}\hbox{ord}_{z}(f)J_{n}(z) =\sum_{d|n}c(d) d-2k\sigma_1(n).
\end{equation}

These results have been generalized by Bringmann et al. \cite[Theorem 1.3]{BKLOR} and Choi, Lee and Lim \cite[Theorem 1.1]{CLL}  to Niebur-Poincar\'e  harmonic weak Maass functions $J_{N,n}(\t)$ of arbitrary level $N$. 
%
For every point $z\in\H$, we define
\begin{align*}
{H}_{N,z}(\t):=&\sum_{n=0}^\i J_{N,n}(z)q^n,\cr
{F}_{N,z}(\t):=&\sum_{n=0}^\i \f_{N,n}(z)q^n=1+\sum_{n=g+1}^\i \f_{N,n}(z)q^n.\cr
\end{align*}

In \cite[Theorem 1.1]{CLL}, it is shown that if $f$ is a meromorphic modular function on $\G_0(N)$ with the product expansion as \eqref{pr}, then it satisfies 
\begin{equation}\label{cll10}
\sum_{z\in\F_N}e_{N,z}\hbox{ord}_{z}(f)H_{N,z}(\t)=-\frac{\partial_k(f)}{f}+{E}_{f}(\t),
\end{equation}
where ${E}_{f}(\t)$ is a modular form in the Eisenstein space of weight 2 on $\G_0(N)$.
Letting ${E}_{f}(\t)=\sum_{n=0}\ve_{f}(n)q^n$ and comparing coefficients of $q^n$ on each side of \eqref{cll10}, we find a generalization of \eqref{divexp1}: 
\begin{equation}\label{divexpN}
\sum_{z\in\F_N}e_{N,z}\hbox{ord}_{z}(f)J_{N,n}(z)=\sum_{d|n}c(d) d-2k\sigma_1(n)+\ve_f(n),\quad (n\in\mathbb N).
\end{equation}

One can establish similar results for $F_{N,z}(\t)$ and $\f_{N,n}(\t)$.

\begin{thm}\label{d} Let $g$ be the genus of $X_0(N)$ and $\i$ be not a Weierstrass point of $X_0(N)$. Assume 
$$ f(\t)=q^h+\sum_{n=h+1}^\i a_f(n)q^n=q^h\prod_{n=1}^\i(1-q^n)^{c(n)}$$
is a meromorphic modular form of weight $k$ on $\G_0(N)$ and 
 ${E}_{f}(\t)=\sum_{n=0}\ve_{f}(n)q^n$ is a modular form in the Eisenstein space of weight 2 on $\G_0(N)$ whose constant coefficient is identical to that of $\frac{\partial_k(f)}{f}$ at each cusp of  $\G_0(N)$ except for the infinity cusp. Then  for all $\t\in\H$,
\begin{equation}\label{gtfne}
\sum_{z\in \F_N}e_{N,z}\hbox{ord}_{z}(f){F}_{N,z}(\t)=-\frac{\partial_k(f)}{f}+{E}_{f}(\t)+\sum_{\ell=1}^g (-\sum_{d|\ell}c(d) d+2k\sigma_1(\ell)-\ve_{f}(\ell) ) \fg_{N,-\ell}(\t).
\end{equation}
Hence, for each $n\geq g+1$, it satisfies that
\begin{align}\label{divf}
\sum_{z\in \F_N}e_{N,z}\hbox{ord}_{z}(f)\f_{N,n}(z)
&=\sum_{d|n}c(d) d-2k\sigma_1(n)+\ve_{f}(n)\cr
&\qquad +\sum_{\ell=1}^g (\sum_{d|\ell}c(d) d-2k\sigma_1(\ell)+\ve_{f}(\ell) ) a_{N}(n,-\ell).
\end{align}
\end{thm}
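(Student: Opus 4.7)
The plan is to reduce \eqref{gtfne} to the analogous generating-series identity \eqref{cll10} for $H_{N,z}$ by expressing $F_{N,z}$ in terms of $H_{N,z}$ via a basis-change, and to extract \eqref{divf} as the $q^n$ coefficient of \eqref{gtfne} for $n\geq g+1$. The key intermediate step is the pointwise identity
$$\f_{N,n}(z)=J_{N,n}(z)+\sum_{\ell=1}^g a_N(n,-\ell)\,J_{N,\ell}(z)\qquad (n\geq g+1,\ z\in\H),$$
which I would justify by observing that both sides are values at $z$ of weight-$0$ harmonic weak Maass functions on $\G_0(N)$ sharing the principal part $q^{-n}+\sum_\ell a_N(n,-\ell)q^{-\ell}$ at $\i$, regular (without principal part) at every other cusp, and with matching constant Fourier coefficient at $\i$; uniqueness of such a form then forces equality. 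Splitting $F_{N,z}(\t)-H_{N,z}(\t)=\sum_{n\geq 0}(\f_{N,n}(z)-J_{N,n}(z))q^n$ into the ranges $n=0$ (both terms equal $1$), $1\leq n\leq g$ (where $\f_{N,n}=0$), and $n\geq g+1$ (where the identity applies), then applying the duality $a_N(n,-\ell)=-b_N(-\ell,n)$ from \eqref{dual} and repackaging via the Fourier expansion \eqref{gF} of $\fg_{N,-\ell}$ yields the clean formula
$$F_{N,z}(\t)=H_{N,z}(\t)-\sum_{\ell=1}^g J_{N,\ell}(z)\,\fg_{N,-\ell}(\t).$$

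Multiplying by $e_{N,z}\,\mathrm{ord}_z(f)$ and summing over $z\in\F_N$, the $H$-sum collapses to $-\partial_k(f)/f+E_f(\t)$ by \eqref{cll10} while each inner sum $\sum_z e_{N,z}\mathrm{ord}_z(f)J_{N,\ell}(z)$ equals $\sum_{d|\ell}c(d)d-2k\sigma_1(\ell)+\ve_f(\ell)$ by \eqref{divexpN}, so the coefficient of $\fg_{N,-\ell}(\t)$ becomes $-\sum_{d|\ell}c(d)d+2k\sigma_1(\ell)-\ve_f(\ell)$, matching \eqref{gtfne} exactly. To obtain \eqref{divf}, I then read off the $q^n$ coefficient of \eqref{gtfne} for $n\geq g+1$: by \eqref{serco} the $-\partial_k(f)/f$ term contributes $\sum_{d|n}c(d)d-2k\sigma_1(n)$, the $E_f$ term contributes $\ve_f(n)$, and each $\fg_{N,-\ell}(\t)$ contributes $b_N(-\ell,n)=-a_N(n,-\ell)$ by duality, assembling into the stated right-hand side.

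The principal technical hurdle is justifying the pointwise identity: for $N>1$ the Niebur-Poincar\'e functions $J_{N,n}$ are genuine harmonic weak Maass forms whose shadows may be non-trivial weight-$2$ cusp forms on $\G_0(N)$, so uniqueness of a harmonic weak Maass form with prescribed principal parts at every cusp is not automatic and rests on the specific normalization of $J_{N,n}$ together with verifying that the shadow of the right-hand side of the pointwise identity matches that of $\f_{N,n}$ (namely zero); once this is in hand, the remainder is bookkeeping using \eqref{dual} and \eqref{serco}.
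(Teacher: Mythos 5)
Your argument takes a genuinely different route from the paper's. The paper never touches the Niebur--Poincar\'e functions in its proof: it applies Stokes' theorem to $(f_\theta-E_f)\,\f_{N,n}\,d\t$ over a punctured fundamental domain, using $\xi_0(\f_{N,n})=0$ together with the residue computations \eqref{res1}--\eqref{cons}, obtains \eqref{divf} directly from the resulting identity \eqref{c1}, and only then assembles \eqref{gtfne} by summing over $n$ and invoking \eqref{dual}. You reverse the order and outsource the analytic work to the cited identity \eqref{cll10}, deriving \eqref{gtfne} from a change of basis $\{J_{N,n}\}\to\{\f_{N,n}\}$ and then reading off \eqref{divf} coefficientwise. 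Granting your pointwise identity, the bookkeeping is correct: $F_{N,z}=H_{N,z}-\sum_{\ell=1}^{g}J_{N,\ell}(z)\fg_{N,-\ell}$ does follow from \eqref{dual} and \eqref{gF} (the range $1\le n\le g$ contributing $-J_{N,\ell}(z)q^{\ell}$ and the range $n\ge g+1$ the tail of $\fg_{N,-\ell}$), and \eqref{cll10} plus \eqref{divexpN} then give \eqref{gtfne}, whence \eqref{divf} via \eqref{serco} and \eqref{dual}.

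The gap is that the pointwise identity $\f_{N,n}=J_{N,n}+\sum_{\ell=1}^{g}a_N(n,-\ell)J_{N,\ell}$ carries essentially the entire content of the theorem in your approach, and you do not prove it; you only name the obstacle (the shadows of the $J_{N,\ell}$). To close it you need one of two arguments. Either use the Bruinier--Funke pairing: if $G$ denotes the difference of the two sides, then $G$ is a harmonic weak Maass function with no principal part at any cusp, so the pairing $\lbrace G,\xi_0(G)\rbrace=\lVert\xi_0(G)\rVert^2$ is computed entirely from the (vanishing) principal parts and equals $0$; hence $\xi_0(G)=0$, $G$ is a pole-free weakly holomorphic function on a compact curve, hence constant, and the constant is $0$ because both sides have vanishing constant term at $\i$. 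Or compute directly: $\xi_0(J_{N,m})$ is an explicit multiple (with an $m$-dependent normalization) of the weight-$2$ cuspidal Poincar\'e series $P_{N,m}$, and pairing $\xi_0(J_{N,n})+\sum_{\ell}a_N(n,-\ell)\xi_0(J_{N,\ell})$ against an arbitrary $h=\sum_m c_h(m)q^m\in S_2(\G_0(N))$ reduces, after the $(4\pi m)^{-1}$ from the Petersson inner product cancels the normalization, to $c_h(n)+\sum_{\ell}a_N(n,-\ell)c_h(\ell)$, which vanishes precisely because the residues of the meromorphic differential $\f_{N,n}\,h\,d\t$ sum to zero, i.e.\ because $\f_{N,n}$ exists. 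Either way you must also record that $J_{N,m}$ has no principal part at cusps other than $\i$ and vanishing constant term at $\i$, and (a smaller point) that the $E_f$ appearing in \eqref{cll10} is normalized exactly as in the theorem, namely that its constant terms agree with those of $f_\theta$ at every cusp except $\i$; otherwise \eqref{divexpN} and \eqref{gtfne} would carry different Eisenstein corrections.
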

\begin{rmk}\label{r1}
The spaces covered in \cite{JKK-hecke} are those of weakly holomorphic modular functions on subgroups of ${\rm PSL}_2(\R)$ generated by $\G_0(N)$ and some Atkin-Lehner involutions. The method of construction of bases of the spaces applies to any subgroup $\G$, where $X(\G)$ is a compact Riemann surface. Similar results with equations \eqref{gtfne} and \eqref{divf} in Theorem \ref{d} hold for such bases as well.
\end{rmk}

As an application of Theorem \ref{d}, we determine the algebraicity of the values of $\f_{N,n}(\t)$ at the divisors of meromorphic modular forms on $\G_0(N)$ in the fundamental domain $\F_N$. 
For the purpose, we first need to establish the algebraicity of Fourier coefficients of 
${E}_{f}(\t)$ in Theorem \ref{d}.

\begin{thm}\label{eis}
For any meromorphic modular form $f$, the Fourier coefficients of ${E}_{f}(\t)$ are algebraic. 
\end{thm}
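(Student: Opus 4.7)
The strategy is to write $E_f$ as an algebraic combination of a fixed algebraic basis of the weight-2 Eisenstein subspace of $\G_0(N)$. Let $\nu=\nu(N)$ denote the number of cusps of $X_0(N)$. The weight-2 Eisenstein subspace has dimension $\nu-1$, and the linear map $\Phi$ sending an Eisenstein form $E$ to the vector of its constant terms at the $\nu-1$ cusps other than $\i$ is an isomorphism onto $\C^{\nu-1}$. Indeed, the constant terms of a weight-2 modular form at the $\nu$ cusps satisfy a single linear relation arising from the residue theorem applied to the associated meromorphic differential on $X_0(N)$, namely a weighted sum $\sum_s w_s c_s=0$ in which each width $w_s$ is positive; hence forcing the non-infinity constants to vanish forces the constant at $\i$ to vanish too, and the resulting cusp form is zero in the Eisenstein subspace. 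By the defining property of $E_f$ in Theorem~\ref{d}, we have $\Phi(E_f)=\mathbf{c}$, where $\mathbf{c}$ is the vector of constant terms of $\partial_k(f)/f$ at the non-infinity cusps.

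The proof then rests on two algebraicity inputs. First, there is a classical basis $\mathcal{E}_1,\dots,\mathcal{E}_{\nu-1}$ of the weight-2 Eisenstein subspace with algebraic (indeed cyclotomic) Fourier coefficients at $\i$: for instance, one may take $E_2(\t)-tE_2(t\t)$ for divisors $t\mid N$ with $t>1$, supplemented by character-twisted Eisenstein series when $N$ is not squarefree. Using the quasi-modular transformation law of $E_2$ under $SL_2(\Z)$, whose anomaly term contributes only algebraic summands, one verifies directly that the constant term of each $\mathcal{E}_i$ at every cusp of $\G_0(N)$ is algebraic. Hence the matrix $M$ representing $\Phi$ in the basis $\{\mathcal{E}_i\}$ has algebraic entries and is invertible by the isomorphism just established. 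Second, since $f$ has algebraic Fourier coefficients at $\i$, the same holds at every cusp (the $SL_2(\Z)$-transformation producing the expansion at another cusp contributes only algebraic correction factors). The Serre derivative $\partial_k$ commutes with these transformations and is built from $\Theta$ and multiplication by $E_2$, both of which preserve algebraicity of Fourier coefficients. Consequently the expansion of $\partial_k(f)/f$ at every cusp has algebraic coefficients, and $\mathbf{c}$ is an algebraic vector.

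Combining these inputs, the coordinates $\mathbf{x}$ of $E_f$ in the basis $\{\mathcal{E}_i\}$ satisfy the linear system $M\mathbf{x}=\mathbf{c}$ over the algebraic numbers, so $\mathbf{x}$, and hence all Fourier coefficients of $E_f=\sum x_i\mathcal{E}_i$, are algebraic. The main obstacle lies in the first input: producing an explicit algebraic basis of the weight-2 Eisenstein subspace when $N$ has repeated prime factors, where one must incorporate Eisenstein series twisted by Dirichlet characters with conductors dividing $N$, and systematically tracking the constant terms of these series at each cusp of $\G_0(N)$ within a fixed algebraic number field. Once this bookkeeping is handled, the remainder of the argument is a routine exercise in linear algebra over $\overline{\Q}$.
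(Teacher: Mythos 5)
Your strategy coincides with the paper's: both express $E_f$ in the Diamond--Shurman basis of $\mathcal E_2(N)$ (the forms $E_2(\t)-tE_2(t\t)$ together with the character twists $E_2^{\varphi,\bar\varphi}(t\t)$ when $N$ is not squarefree), both use the residue theorem to see that the ``constant terms at the non-infinity cusps'' map is an isomorphism, and both reduce the claim to the algebraicity of (i) the constant terms of the basis elements at every cusp and (ii) the constant terms of $\partial_k(f)/f$ at every cusp. For (ii) the paper is actually simpler than you are: the residue computation gives the constant term of $f_\theta$ at a cusp $\mathfrak s$ directly as $\hbox{ord}_{\mathfrak s}(f)/h_{\mathfrak s}-k/12$, which is rational, so you do not need to invoke a $q$-expansion principle for $f$ itself at the other cusps (a claim you assert but do not justify, and which is not needed).

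The genuine gap is in (i), precisely the step you set aside as ``bookkeeping.'' Your appeal to the quasi-modular transformation law of $E_2$ handles only the forms $E_2(\t)-tE_2(t\t)$ (this is the paper's Proposition 3.1). For the character-twisted series $E_2^{\varphi,\bar\varphi}(t\t)$, which are unavoidable exactly when $N$ has a square factor, the transformation law of $E_2$ is irrelevant, and the algebraicity of the constant terms at the cusps is not at all automatic: unwinding the definition, the constant term of $G_2^{\varphi,\bar\varphi}|_2\gamma$ at a cusp is a combination of the partial zeta values $\zeta_u^{d}(2)=\sum_{m\equiv d\ (u)}m^{-2}$, and the normalization of $E_2^{\varphi,\bar\varphi}$ divides by $4\pi^2 g(\varphi)$. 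One must (a) determine at which cusps the constant term can be nonzero at all (a case analysis on the coset representative $\gamma_{(r,s)}$ carried out in the paper's Propositions 3.2 and 3.4 and Lemma 3.3), and (b) use the closed-form evaluation of $\zeta_u^d(2)$ as $\frac{\pi^2}{u^2}\cdot\frac{1}{1-\cos(2\pi d/u)}$ so that the $\pi^2$ cancels and the result lands in a cyclotomic field. Without step (b) the constant terms are a priori transcendental-looking quantities, so this is the mathematical heart of the theorem rather than routine tracking; as written, your argument establishes the squarefree case but leaves the general case unproved.
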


Theorem \ref{d} and Theorem \ref{eis} together imply the sum of the values of $ \f_{N,n}$ at points of divisors of an algebraic modular form is algebraic. In addition, we show that the values of $\f_{N,n}$ is algebraic at those points. 

\begin{thm}\label{alg}  Let $ f(\t)$ be a meromorphic modular form on $\G_0(N)$ with algebraic Fourier coefficients. If 
$z_0\in\F_N$ is a point for which $\textrm{ord}_{z_0}(f)\neq 0$, then $\f_{N,n}(z_0)$ for all $n$ are algebraic.
\end{thm}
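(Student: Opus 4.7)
The plan is to combine Theorems \ref{d} and \ref{eis} with a multivariable Newton-identity argument to extract algebraicity of each individual value.

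First, I would use Theorem \ref{eis} together with the observation that, since $f$ has algebraic Fourier coefficients, the exponents $c(n)$ in the product expansion \eqref{pr} are algebraic (they are determined recursively from the $a_f(n)$). Plugging this into \eqref{divf}, for every $n\ge g+1$ the weighted sum
$$S_n \;:=\; \sum_{z\in\F_N} e_{N,z}\,\hbox{ord}_z(f)\,\f_{N,n}(z)$$
lies in $\overline{\Q}$. By linearity the same holds with $\f_{N,n}$ replaced by any polynomial $P$ in the basis elements with algebraic coefficients: such a $P$ is a weakly holomorphic modular function with algebraic Fourier coefficients, hence a finite $\overline{\Q}$-linear combination of the $\f_{N,m}$'s.

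Next I would extract the individual values. Let $\{z_1,\dots,z_r\}$ be the support of $\hbox{div}(f)$ in $\F_N\setminus\{\i\}$, with nonzero integer weights $m_i:=e_{N,z_i}\hbox{ord}_{z_i}(f)$. Because $\f_{N,g+1},\dots,\f_{N,2g+1}$ generate the function field of $X_0(N)$, the map $\Phi:z\mapsto(\f_{N,g+1}(z),\dots,\f_{N,2g+1}(z))$ separates these finitely many points. I would then form the rational function
$$R(X_1,\dots,X_{g+1}) \;:=\; \sum_{i=1}^r \frac{m_i}{\prod_{j=1}^{g+1}\bigl(X_j-\f_{N,g+j}(z_i)\bigr)},$$
whose Laurent expansion at $X_j\to\i$ has coefficients $\sum_i m_i \prod_j \f_{N,g+j}(z_i)^{k_j}$, all algebraic by the first paragraph applied to $P=\prod_j \f_{N,g+j}^{k_j}$. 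Thus $R$ is a rational function with algebraic Laurent coefficients, and its poles---the distinct points $\Phi(z_i)\in\C^{g+1}$---must be algebraic. Finally, since $\f_{N,n}=Q_n(\f_{N,g+1},\dots,\f_{N,2g+1})$ for a polynomial $Q_n$ with algebraic coefficients, each $\f_{N,n}(z_0)$ with $n\ge g+1$ is algebraic; the remaining values ($\f_{N,0}=1$ and $\f_{N,n}=0$ for $1\le n\le g$) are trivial.

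The step I expect to be the main obstacle is the passage from algebraicity of the weighted sums $S_n$ to algebraicity of the individual values at each $z_i$. In genus zero (the setting of \cite{BKO}) the Hauptmodul is injective on the fundamental domain and a one-variable Newton-identity argument suffices. For higher genus one must use several functions simultaneously, which is what the multivariable $R$ accomplishes; the delicate part will be to rigorously extract algebraicity of its poles from algebraicity of its multivariable Laurent expansion, and to rule out spurious cancellations (e.g.\ $\f_{N,n}$ taking the same value at two points with opposite weights), which the separation property of $\Phi$ handles once all $g+1$ generators are used together.
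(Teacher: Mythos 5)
Your overall architecture matches the paper's: both arguments first deduce from Theorems \ref{d} and \ref{eis}, the recursion for the exponents $c(n)$ (the paper isolates this as Proposition \ref{rec}), and the algebraicity of the Fourier coefficients of the $\f_{N,m}$ (obtained in the paper via Yang's eta-quotient construction) that the weighted divisor sum of \emph{every} monomial $\prod_j\f_{N,g+j}^{k_j}$ is algebraic, exactly as in your first two paragraphs; the paper phrases this as an induction on $k_1+\cdots+k_{g+1}$ using the fact that each such monomial is an algebraic linear combination of the $\f_{N,m}$. Where you diverge is the extraction step. The paper works one generator at a time: for each fixed $j$ the power sums $\sum_s\f_{N,g+j}(z_s)^k$ are algebraic, so Newton's identities give the elementary symmetric functions, hence $\prod_s\bigl(x-\f_{N,g+j}(z_s)\bigr)$ has algebraic coefficients and every individual value $\f_{N,g+j}(z_0)$ is a root of a polynomial over $\overline{\Q}$. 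This is simpler than your multivariable partial-fraction function $R$ and, crucially, it makes your flagged ``delicate part'' unnecessary: one never needs to extract algebraic poles from a multivariable Laurent expansion, and no separation property of $\Phi$ is invoked (coincidences $\f_{N,g+j}(z_s)=\f_{N,g+j}(z_{s'})$ are harmless for a polynomial with repeated roots). Two further points you should incorporate. First, at elliptic points the weights $e_{N,z}\,\hbox{ord}_z(f)$ are fractional, which breaks the ``polynomial with these roots'' formulation; the paper removes elliptic points from the sum beforehand, noting that the values of $\f_{N,n}$ there are already algebraic by Shimura reciprocity (there are only finitely many such points, all CM). Your $R$, which tolerates arbitrary nonzero algebraic weights $m_i$, could in principle absorb this, but only at the cost of actually proving the multivariable pole statement, which as written is the one genuinely unproved step in your proposal. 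Second, neither your argument nor the paper's explicitly addresses negative weights (poles of $f$); if you do keep the generating-function formulation you should at least carry out the one-variable case (rational function over $\overline{\Q}$ determined by finitely many Taylor coefficients) and confront the possible cancellation $\sum_{i:\f_{N,g+j}(z_i)=w}m_i=0$, which the single function $\f_{N,g+j}$ alone cannot rule out.
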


Assume all $\f_{N,g+1}(z_0), \f_{N,g+2}(z_0),\dots, \f_{N,2g+1}(z_0) $ are algebraic. Then since $j=\frac{P(\f_{N,g+1},\dots,\f_{N,2g+1})}{Q(\f_{N,g+1},\dots,\f_{N,2g+1})}$ for some polynomials $P$ and $Q$ with algebraic coefficients, $j(z_0)$ is algebraic. Therefore, by Schneider's theorem, $z_0$ must be  either transcendental or imaginary quadratic. 
\begin{cor}
Any zero or a pole of a meromorphic modular form of arbitrary weight and level with algebraic Fourier coefficients is  either  transcendental or imaginary quadratic. 
\end{cor}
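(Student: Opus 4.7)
The plan is to derive the corollary as an immediate consequence of Theorem \ref{alg} combined with Schneider's classical transcendence theorem. Let $f$ be a non-zero meromorphic modular form on $\G_0(N)$ with algebraic Fourier coefficients, and let $z_0\in\H$ be a zero or pole of $f$. Since $\G_0(N)\subset SL_2(\Q)$ acts on $\H$ by M\"obius transformations with rational (in fact integer) coefficients, replacing $z_0$ by any $\G_0(N)$-translate preserves both algebraicity over $\Q$ and the property of being imaginary quadratic irrational; consequently I may assume without loss of generality that $z_0$ lies in the fundamental domain $\F_N$.

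By Theorem \ref{alg}, $\f_{N,n}(z_0)$ is algebraic for every $n\geq 0$; in particular, the finite list $\f_{N,g+1}(z_0), \f_{N,g+2}(z_0),\dots,\f_{N,2g+1}(z_0)$ consists of algebraic numbers. As recalled in the introduction, Yang's construction of the generators of the function field of $X_0(N)$ (via Dedekind eta-quotients or generalized eta-quotients) ensures that the modular $j$-function admits a representation $j=P(\f_{N,g+1},\dots,\f_{N,2g+1})/Q(\f_{N,g+1},\dots,\f_{N,2g+1})$ for some polynomials $P,Q$ with algebraic coefficients. Evaluating this rational identity at $\t=z_0$ yields $j(z_0)\in\overline{\Q}$.

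Now I invoke Schneider's theorem: any algebraic $z_0\in\H$ is necessarily non-real and hence has degree at least $2$ over $\Q$, while if its degree were strictly greater than $2$ then $j(z_0)$ would be transcendental, contradicting the algebraicity just established. Therefore $z_0$ is either transcendental or imaginary quadratic irrational. The main obstacle is not in the corollary itself but in Theorem \ref{alg}, which supplies the crucial algebraicity of the values $\f_{N,n}(z_0)$; once that input is granted, the deduction above is purely formal, relying only on the algebraic nature of Yang's generators of the function field of $X_0(N)$ and on Schneider's theorem used as a black box.
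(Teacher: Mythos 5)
Your proposal is correct and follows essentially the same route as the paper: apply Theorem \ref{alg} to get algebraicity of $\f_{N,g+1}(z_0),\dots,\f_{N,2g+1}(z_0)$, express $j$ as a rational function of these basis elements with algebraic coefficients to conclude $j(z_0)\in\overline{\Q}$, and invoke Schneider's theorem. The only addition is your explicit reduction to the fundamental domain via a $\G_0(N)$-translate, which the paper leaves implicit.
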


\section{Proof of Theorem \ref{d}}

For each cusp $\mathfrak s$, we take $\gamma_{\mathfrak s}=\left(\begin{smallmatrix}
  a&b\\c &d
  \end{smallmatrix}\right) \in SL_{2}(\Bbb{Z})$ satisfying $\gamma_{\mathfrak s}( \infty) =
  \mathfrak s$.
Then there exists a unique positive real number $h_{\mathfrak s}$, so called {\it width} of the cusp $\mathfrak s$ such that
$$
\gamma_{\mathfrak s}^{-1}\G_0(N)_{\mathfrak s}\gamma_{\mathfrak s}=\{\pm
\left(\begin{smallmatrix}
  1&h_{\mathfrak s}\\0 &1
  \end{smallmatrix}\right)^{m} | m\in \Bbb{Z}\}.
$$
Let $S_N$ be the set of inequivalent cusps of $\G_0(N)$ and $S_N^*=S_N\backslash\{\i\}$. 

If $g(\t)=\sum_{n=0}^\i a_g(n)q^n$ is a meromorphic modular form of weight 2 on $\G_0(N)$, then $g(\t)$ has a Fourier expansion at each cusp $\mathfrak s$ in the form
$$
(g|_2\gamma_{\mathfrak s})(\t)=\sum_{n\geq n_{0}^{(\mathfrak s)}}a_g^{(\mathfrak s)}(n)q^{n/h_{\mathfrak s}}.
$$
Here $|_{k}$ is the usual weight $k$ slash operator. Now we use the canonical quotient map $\pi$ from $\H\cup\Q\cup\{\i\}$ to $X_0(N)$ to consider  $\omega_{g}=g(\t)d\t$, a meromorphic 1-form on $X_0(N)$. 

Then we see that for each cusp $\mathfrak s$ and $\tau \in \H$,
\begin{equation}\label{res1}
\text{Res}_{\pi(\mathfrak s)}\omega_{g}=\frac{h_{\mathfrak s}}{2 \pi i }a_g^{(\mathfrak s)}(0)
\quad  \mathrm{and}\quad \hbox{Res}_{\pi(\tau)}\omega_{g}={e_{N,\tau}}\hbox{Res}_{\tau}g.
\end{equation}

We recall that  $\displaystyle{f_\theta:=\frac{\partial_k (f)}{f}=\frac{\Theta(f)}{f}-\frac{kE_2}{12}}$ is a meromorphic modular form of weight $2$ on $\G_0(N)$. Moreover, it is holomorphic at each cusp and has only simple poles, if any, with the residue at each point $\t\in X_0(N)$ 
\begin{equation}\label{res2}
\hbox{Res}_{\pi(\tau)}\omega_{f_\theta}=\frac{1}{2\pi i}{e_{N,\t}}\hbox{ord}_{\tau}f.
\end{equation}
 Hence we have from \eqref{res1} and \eqref{res2} that for any $\t\in \F_N$, 
\begin{equation}\label{res3}
{e_{N,\t}}\hbox{ord}_{\tau}f={2\pi i}({e_{N,\t}})\hbox{Res}_{\tau}(f_\theta)
\end{equation}
and for each cusp $\mathfrak s$,
\begin{equation}\label{cons}
a_{f_\theta}^{(\mathfrak s)}(0)=\frac{\hbox{ord}_{\mathfrak s}f}{h_{\mathfrak s}}-\frac{k}{12}.
\end{equation}

Let $P_g$ be the set of singular points of $g(\t)$ on $\F_N$. In addition,
we let $\F_N(g,\ve)$ denote a punctured fundamental domain for $\G_0(N)$, which is obtained from $\F_N$ by deleting sufficiently small $\ve$-neighborhoods of singular points of $g$ and inequivalent cusps of $\G_0(N)$.  We further let $\g(t,\ve)$ denote the circle around $t$ with the radius $\ve$ so that $\partial\F_N(g,\ve)=\bigcup_{t\in P_g\cup S_N}\g(t,\ve)$. Following arguments in \cite[Proposition 3.5]{BF}, we use 
$$d(g\cdot \f_{N,n}d\t)=g\cdot\overline{\xi_0(\f_{N,n}(\t))}dxdy,$$
where $\t=x+iy$ and $\xi_0$ is the differential operator $\xi_0=2i \overline{\frac{\partial}{\partial\bar \t}}$ and apply Stokes' theorem to have
\begin{align}\label{stoke}
&\int_{\F_N(g,\ve)}g(\t)\cdot\overline{\xi_0(\f_{N,n}(\t))}dxdy\cr
&\qquad= \sum_{\frak{s}\in S_N}\int_{\g(\frak s,\ve)}g(\t)\f_{N,n}(\t)d\t+\sum_{z\in P_g}\int_{\g(z,\ve)}g(\t)\f_{N,n}(\t)d\t.
\end{align}

Since $\xi_0(\f_{N,m}(\t))=0$, using the same computation as \cite[Lemma 3.1]{Choi}, we find  that
\begin{align}\label{c1}
0=&\lim_{\ve\to 0}\int_{\F_N(g,\ve)}g(\t)\cdot\overline{\xi_0(\f_{N,n}(\t))}dxdy\cr
&\quad = a_g(n)+\sum_{\ell=1}^g a_N(n,-\ell)a_g(\ell)+\sum_{\frak{s}\in S^*_N}h_\frak{s} g(\frak{s})\f_{N,n}(\frak{s})+\sum_{z\in P_g}{2\pi i}({e_{N,z}})Res_z(g)\f_{N,n}(z).
\end{align}

Now we consider $\mathcal E_2(N)$, the Eisenstein space of weight 2 on $\G_0(N)$. We can always choose ${E}_{f}(\t)\in \mathcal E_2(N)$ so that its constant coefficient is identical to that of $f_\theta$ at each cusp of  $\G_0(N)$ except for the infinity cusp: Let $S_N=\{{\mathfrak s}_1,{\mathfrak s}_2,\dots {\mathfrak s}_t=\i\}$ be the set of inequivalent cusps of $\G_0(N)$. Then $\mathrm{dim}\ \mathcal E_2(N)=t-1$. Suppose $\mathcal E_2(N)$  is spanned by $\{E^{(1)},E^{(2)},\dots,E^{(t-1)}\}$. Recall from \eqref{cons} that the constant term of $f_\theta$ at each cusp ${\mathfrak s}_i$ is given by
\begin{equation}\label{fthetacons}
c_i:=\frac{\hbox{ord}_{{\mathfrak s}_i}f}{h_{{\mathfrak s}_i}}-\frac{k}{12}.
\end{equation}
By solving the system of equations 
\begin{equation}\label{se}\a_1E^{(1)}({\mathfrak s}_i)+\a_2E^{(2)}({\mathfrak s}_i)+\cdots+\a_{t-1}E^{(t-1)}({\mathfrak s}_i)=c_i\quad (1\leq i<t)
\end{equation}
for $(\a_1,\a_2,\dots,\a_{t-1})$, we obtain the desired modular form
\begin{equation}\label{se1}{E}_{f}(\t)=\a_1E^{(1)}(\t)+\a_2E^{(2)}(\t)+\cdots+\a_{t-1}E^{(t-1)}(\t).
\end{equation}
Since the coefficient matrix of the system of equations in \eqref{se} is non-singular, which can be easily shown by the Residue Theorem, 
the existence and the uniqueness of such modular form are guaranteed. 

We shall take $g=f_\theta-E_f$ in \eqref{c1}. Then by \eqref{serco} and \eqref{res3}, we obtain \eqref{divf}. Multiplying both sides of \eqref{divf} by $q^n$ and summing on $n$, and then applying the coefficient duality in \eqref{dual} yields \eqref{gtfne}.

%

\section{Proof of Theorem \ref{eis}}

Define for any point $(a,b)\in(\Z/N\Z)^2$ of order $N$, 
$$G_2^{(a,b)}(\tau):=\sum_{\substack{n=1\\ \gcd(n,N)=1}}^\i \sum_{\substack{(c,d)\equiv (a,b) (N)\\ \gcd(c,d)=n}} \frac{1}{(c\t+d)^2}.$$
For a primitive Dirichlet character $\varphi$ modulo $u$ such that $u^2=N$, the Eisenstein series $G_2^{\varphi,\bar\varphi}(\tau)$ is defined by
\begin{equation}\label{eisg}
G_2^{\varphi,\bar\varphi}(\tau):=\sum_{c=0}^{u-1}\sum_{d=0}^{u-1}\sum_{e=0}^{u-1}\varphi(cd)G_2^{(cu,d+eu)}(\tau)
\end{equation}
and the normalized Eisenstein series $E_2^{\varphi,\bar\varphi}(\tau)$ is given by
\begin{equation}\label{neis}
E_2^{\varphi,\bar\varphi}(\tau):=-\frac{u^2}{4\pi^2g(\varphi)}\frac{}{}G_2^{\varphi,\bar\varphi}(\tau),
\end{equation}
where $g(\varphi):=\displaystyle\sum_{n=0}^{u-1}\varphi(n)e^{\frac{2\pi n i}{u}}$ is the Gauss sum.

It follows from \cite[Theorem 4.6.2]{DS} that $\{E_2(\t)-dE_2(d\t):d|N, d\neq 1 \}\cup\{E_2^{\varphi,\bar\varphi}(t\tau): 1<tu^2|N, \varphi\ {\mathrm{is\ not\ trivial}} \}$ is a basis of $\mathcal E_2(N)$.
\par
Hence if $N$ is square-free, $\{E_2(\t)-dE_2(d\t): d|N, d\neq 1\}$ is a basis of $\mathcal E_2(N)$ and 
$$E_{f}(\t)=\sum_{d|N, d\neq 1} \a_d(E_2(\t)-dE_2(d\t))$$
for some constants $\a_d$.
In \cite[Lemma 5.1 (2)]{CLL}, it is proved that the constant term of $E_2(\tau)-dE_2(d\tau)$ at cusp $\frac{1}{v}$ is $1-\frac{\gcd(d,v)^2}{d}$ and thus, due to \eqref{fthetacons}, \eqref{se} and \eqref{se1}, the Fourier coefficients of $E_f$ are algebraic.

Now assume $N$ is not square-free. Then we have to consider both $E_2(\t)-dE_2(d\t)$ and $E_2^{\varphi,\bar\varphi}(t\tau)$. We first note that it holds that
\begin{equation}\label{egt}
E_2|_2\gamma(\tau)=E_2(\tau)-\dfrac{6ci}{\pi(c\tau+d)}\ 
\mathrm{for\ any}\ \gamma=\begin{pmatrix}a&b\\c& d\end{pmatrix}\in\Gamma(1),
\end{equation}
and then compute the constant coefficients of all $E_2(\t)-dE_2(d\t)$ appearing in the basis for $\mathcal E_2(N)$.

\begin{prop}\label{ede} Suppose $N$ is not square-free and $v$ is a divisor of $N$ such that $\gcd(v,N/v)>1$.
Let $\frac{e}{v}$ be a cusp on $X_0(N)$ with $e>1$.
Then the constant term of $E_2(\tau)-dE_2(d\tau)$ at $\frac{e}{v}$ is equal to $1-\frac{\gcd(d,v)^2}{d}$, which is the constant term of $E_2(\tau)-dE_2(d\tau)$ at $\frac{1}{v}$. 
\end{prop}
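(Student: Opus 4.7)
The plan is to compute, for each of the two summands $E_2(\t)$ and $dE_2(d\t)$ separately, the constant Fourier coefficient of the slashed function $(\,\cdot\,)|_2\g_{\mathfrak{s}}$ and to verify that the answer depends only on $v$ and $d$, not on $e$. Choose $\g_{\mathfrak{s}}=\left(\begin{smallmatrix} e & b \\ v & d_2 \end{smallmatrix}\right)\in SL_2(\Z)$ with $ed_2-bv=1$, which exists because $\gcd(e,v)=1$. For the first summand, \eqref{egt} gives $(E_2|_2\g_{\mathfrak{s}})(\t)=E_2(\t)-\frac{6vi}{\pi(v\t+d_2)}$, which tends to $1$ as $\textrm{Im}(\t)\to\i$, so the constant coefficient of $E_2$ at $e/v$ equals $1$.

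For the second summand, the idea is to reduce again to an application of \eqref{egt} via a matrix factorization. Let $\delta:=\gcd(d,v)$. Since $\gcd(e,v)=1$, we have $\gcd(de,v)=\delta$, so $A:=de/\delta$ and $C:=v/\delta$ are coprime; complete them to $\g'=\left(\begin{smallmatrix} A & B \\ C & D \end{smallmatrix}\right)\in SL_2(\Z)$. A direct calculation of $(\g')^{-1}\left(\begin{smallmatrix} d & 0 \\ 0 & 1 \end{smallmatrix}\right)\g_{\mathfrak{s}}$ yields the factorization
$$\mat d & 0 \\ 0 & 1 \emat\g_{\mathfrak{s}}=\g'\mat \delta & \beta \\ 0 & d/\delta \emat$$
for some $\beta\in\Z$: the $(1,1)$ entry is $\delta(AD-BC)=\delta$, the $(2,1)$ entry vanishes by the choice of $A,C$, and the determinant $d$ forces the $(2,2)$ entry to be $d/\delta$.

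Setting $w:=(\delta^2\t+\delta\beta)/d$, this gives $d\g_{\mathfrak{s}}\t=\g'(w)$, and \eqref{egt} applied to $\g'$ yields
$$E_2(d\g_{\mathfrak{s}}\t)=(Cw+D)^2E_2(w)-\frac{6Ci(Cw+D)}{\pi}.$$
Dividing by $(v\t+d_2)^2$ and letting $\textrm{Im}(\t)\to\i$: $E_2(w)\to 1$, the ratio $(Cw+D)^2/(v\t+d_2)^2$ tends to $(v\delta/d)^2/v^2=\delta^2/d^2$, and the remaining term is of order $1/\t$ and vanishes. Multiplying by $d$, the constant coefficient of $dE_2(d\t)|_2\g_{\mathfrak{s}}$ at $e/v$ equals $\gcd(d,v)^2/d$. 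Subtracting, $(E_2-dE_2(d\,\cdot\,))|_2\g_{\mathfrak{s}}$ has constant coefficient $1-\gcd(d,v)^2/d$ at $e/v$; this value is manifestly independent of $e$ and hence matches the value at $1/v$ recorded in \cite[Lemma 5.1(2)]{CLL}. The main technical point is expected to be the arithmetic identity $\gcd(de,v)=\gcd(d,v)$ that enables the matrix factorization; once this is in hand, the limit calculation is routine.
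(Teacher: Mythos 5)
Your proof is correct and follows essentially the same route as the paper: the same factorization $\left(\begin{smallmatrix} d&0\\0&1\end{smallmatrix}\right)\gamma_{\mathfrak{s}}=\gamma'\left(\begin{smallmatrix}\delta&\beta\\0&d/\delta\end{smallmatrix}\right)$ with $\gamma'\in SL_2(\Z)$ built from $de/\gcd(d,v)$ and $v/\gcd(d,v)$, followed by the $E_2$ quasimodularity formula \eqref{egt}. The only cosmetic difference is that you extract the constant term by letting $\mathrm{Im}(\tau)\to\infty$, whereas the paper writes out the slashed expansion explicitly; both are valid.
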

\begin{proof}
We need to compute $(E_2(\tau)-dE_2(d\tau))|_2\gamma$, where $\gamma \in \G(1)$ satisfies $\gamma( \infty) =\frac{e}{v}$. 
Taking $f$ and $h$ so that $eh-fv=1$, we obtain such $\gamma=\begin{pmatrix}e&f\\v&h\end{pmatrix}$. 
Let $s=\gcd(d,v)$ and let $d=d_1 s$ and $v=v_1 s$.
Since $\gcd(v_1,d_1e)=1$, we can choose $a,b\in\Z$ so that $ad_1e-bv_1=1.$ If we let $\alpha=\begin{pmatrix}d_1e&b\\v_1&a\end{pmatrix}\in\Gamma(1)$, then
\begin{equation}\label{d001}
\begin{pmatrix}d&0\\0&1\end{pmatrix}\begin{pmatrix}e&f\\v&h\end{pmatrix}=\alpha\begin{pmatrix}s&k\\0& d_1\end{pmatrix},
\end{equation}
where $k=adf-bh$. Let $\beta=\begin{pmatrix}s&k\\0& d_1\end{pmatrix}$.
%
By \eqref{egt}, we have
\begin{equation}\label{e1}E_2|_2\gamma(\tau)=E_2(\tau)-\dfrac{6vi}{\pi(v\tau+h)}.
\end{equation}
Applying \eqref{d001} in the second and \eqref{egt} in the last equality below, we have
\begin{equation}\label{e2}dE_2(d\tau)|_2\gamma=E_2|_2\sm d&0\\0&1\esm\gamma(\tau)=E_2|_2\alpha\beta(\tau)=\left(E_2(\tau)-\dfrac{6v_1i}{\pi(v_1\tau+a)}\right)|_2\beta.
\end{equation}
Here,
\begin{equation}\label{e3}E_2|_2\beta(\tau)=dE_2(\beta(\tau))d_1^{-2}=\frac{\gcd(d,v)^2}{d}E_2\left(\frac{\gcd(d,v)^2}{d}\tau+\frac{k\gcd(d,v)}{d}\right)\end{equation}
and
\begin{equation}\label{e4}\dfrac{6v_1i}{\pi(v_1\tau+a)}|_2\beta=d\dfrac{6v_1i}{\pi(v_1\beta(\tau)+a)}d_1^{-2}=\dfrac{6vi}{\pi(v\tau+h)}.
\end{equation}
It follows from \eqref{e1} through \eqref{e4} that 
$$(E_2(\tau)-dE_2(d\tau))|_2\gamma=E_2(\tau)-\frac{\gcd(d,v)^2}{d}E_2\left(\frac{\gcd(d,v)^2}{d}\tau+\frac{k\gcd(d,v)}{d}\right).$$
Hence the constant term of $E_2(\tau)-dE_2(d\tau)$ at $\frac{e}{v}$ is $\displaystyle{1-\frac{\gcd(d,v)^2}{d}}$.
\end{proof}
Now we compute the constant coefficients of $E_2^{\varphi,\bar\varphi}(d\t)$ appearing in the basis for $\mathcal E_2(N)$.

\begin{prop}\label{ct} Suppose $N=u^2$ for some positive integer $u$. Let $\varphi$ be a primitive Dirichlet character modulo $u$ and let $\frac{e}{v}$ with $v|N$ be any cusp of $X_0(N)$. Then the constant term $c_{\frac{e}{v}}$ of $E_2^{\varphi,\bar\varphi}(\tau)$ at $\frac{e}{v}$ is given by
\begin{align*}
c_{\frac{e}{v}}=\begin{cases}\displaystyle-\frac{1}{2g(\varphi)}\sum_{d\in(\Z/u\Z)^*}\varphi(-ed^2)\frac{1}{1-\cos(2d\pi/u)},& v=u, \\
0,&\hbox{otherwise}.\end{cases}
\end{align*}
\end{prop}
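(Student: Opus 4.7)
The plan is to compute the constant Fourier coefficient at $\i$ of $(E_2^{\varphi,\bar\varphi}|_2\gamma_{e/v})(\t)$, where $\gamma_{e/v}=\sm e&f\\v&h\esm\in SL_2(\Z)$ with $eh-fv=1$. The key point is the transformation law $G_2^{(a,b)}|_2\gamma=G_2^{(a,b)\gamma}+\mathrm{Corr}(\gamma,\t)$, where the non-holomorphic correction $\mathrm{Corr}$, coming from Hecke regularisation of the conditionally convergent weight-$2$ sum, does not depend on $(a,b)$. Since $\varphi$ is a non-trivial character mod $u$, we have
$$\sum_{c,d=0}^{u-1}\varphi(cd)=\Bigl(\sum_{c=0}^{u-1}\varphi(c)\Bigr)^{\!2}=0,$$
so after weighting by $\varphi(cd)$ and summing, the correction cancels and
$$G_2^{\varphi,\bar\varphi}|_2\gamma_{e/v}(\t) = \sum_{c,d,e'=0}^{u-1}\varphi(cd)\,G_2^{(a',b')}(\t),$$
where $(a',b')\equiv(cue+(d+e'u)v,\ cuf+(d+e'u)h)\pmod{u^2}$.

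Next I would use that the constant coefficient of $G_2^{(a',b')}$ at $\i$ vanishes unless $a'\equiv 0\pmod{u^2}$ and $\gcd(b',u^2)=1$, in which case, by the partial-fraction identity $\sum_{n\in\Z}(n+\alpha)^{-2}=\pi^2/\sin^2(\pi\alpha)$, it equals $\pi^2/\bigl(u^4\sin^2(\pi b'/u^2)\bigr)$. To handle $v\neq u$, I would reduce the congruence $a'=cue+(d+e'u)v\equiv 0\pmod{u^2}$ modulo $u$, obtaining $dv\equiv 0\pmod u$, i.e.\ $d\equiv 0\pmod{u/\gcd(u,v)}$. If $u\nmid v$, then every non-zero admissible $d$ has $\gcd(d,u)>1$, killing $\varphi(d)$. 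When instead $v=v_0u$ with $1<v_0\le u$, the condition mod $u^2$ rearranges to $c\equiv -dv_0 e^{-1}\pmod u$, which forces $\gcd(c,u)\ge v_0>1$, killing $\varphi(c)$. Either way $\varphi(cd)=0$, so $c_{e/v}=0$.

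For $v=u$, the condition becomes $d\equiv -ce\pmod u$, and $\varphi(cd)\ne 0$ then requires $c\in(\Z/u\Z)^*$, giving $\varphi(cd)=\varphi(-ec^2)$. Writing $b'=dh+u(cf+e'h)$ and using $\gcd(h,u)=1$ so that $cf+e'h$ runs through $\Z/u\Z$ as $e'$ does, the classical identity
$$\sum_{w=0}^{u-1}\frac{1}{\sin^2\!\bigl(\pi(y+w/u)\bigr)} = \frac{u^2}{\sin^2(\pi uy)}$$
applied with $y=dh/u^2$ collapses the $e'$-sum of constant terms to $\pi^2/\bigl(u^2\sin^2(\pi dh/u)\bigr)$. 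Since $eh\equiv 1$ and $d\equiv -ce\pmod u$ yield $dh\equiv -c\pmod u$, this equals $\pi^2/\bigl(u^2\sin^2(\pi c/u)\bigr)$. Summing over $c\in(\Z/u\Z)^*$ with weight $\varphi(-ec^2)$ and multiplying by the normalisation $-u^2/(4\pi^2 g(\varphi))$ from \eqref{neis} gives
$$c_{e/u}=-\frac{1}{4g(\varphi)}\sum_{c\in(\Z/u\Z)^*}\frac{\varphi(-ec^2)}{\sin^2(\pi c/u)},$$
which becomes the claimed formula via $\sin^{-2}\theta=2/(1-\cos 2\theta)$ and relabelling.

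I expect the main obstacle to be the case analysis in the second step for $v\neq u$: one must systematically check each divisor $v$ of $u^2$ and show that the two constraints $a'\equiv 0\pmod{u^2}$ and $\gcd(cd,u)=1$ are jointly unsolvable, splitting according to whether $u\mid v$ or not. A secondary subtlety is verifying that the weight-$2$ correction $\mathrm{Corr}$ is genuinely $(a,b)$-independent, which follows from the standard Hecke-regularised transformation theory as in \cite{DS}.
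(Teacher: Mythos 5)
Your argument is correct and follows essentially the same route as the paper: transform $G_2^{(a,b)}$ under $|_2\gamma$, use the character sum $\sum\varphi(cd)=0$ to dispose of the weight-$2$ correction, detect vanishing of the constant term via the condition $a'\equiv 0\pmod{u^2}$ together with $\varphi(cd)=0$, and evaluate the surviving sum through $\zeta_u^{d}(2)=\pi^2/\bigl(u^2\sin^2(\pi d/u)\bigr)$. The only organizational difference is that you compute directly with the scaling matrix $\left(\begin{smallmatrix}e&f\\v&h\end{smallmatrix}\right)$ and case-split on $v$, whereas the paper first computes $G_2^{\varphi,\bar\varphi}|_2\gamma_{(r,s)}$ on coset representatives of $\Gamma_0(N)\backslash\Gamma(1)$ and then separately matches each cusp $\frac{e}{v}$ to its representative; your version bypasses that matching step, and is also more explicit about why the non-holomorphic correction cancels.
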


\begin{proof}
We first compute the constant coefficient of $G_2^{\varphi,\bar\varphi}(\tau)$.
Note that $\Gamma(1)=\bigcup_{(r,s)}\Gamma_0(N)\gamma_{(r,s)}$
where $\{\gamma_{(r,s)}=\sm p&q\\r&s\esm \,|\,\gcd(r,s)=1,s|N,0<r\leq \frac{N}{s}\}$ is a set of coset representatives of $\Gamma_0(N)\backslash\Gamma(1)$. For computing the constant term of $G_2^{\varphi,\bar\varphi}|_2\gamma(\tau)$ for $\gamma\in\G(1)$, it suffices to compute $G_2^{\varphi,\bar\varphi}|_2\gamma_{(r,s)}(\tau)$ for some $(r,s)$, because $\Gamma_0(N)$ acts on $G_2^{\varphi,\bar\varphi}$ trivially (See \cite[p.127]{DS}.) By \eqref{eisg}, we see that
$$G_2^{\varphi,\bar\varphi}|_2\gamma(\tau)=\sum_{c=0}^{u-1}\sum_{d=0}^{u-1}\sum_{e=0}^{u-1}\varphi(cd)G_2^{(cu,d+eu)\gamma}(\tau).$$

Let us consider the case of $s>1$. Since $(cu,d+eu)\gamma_{(r,s)}=(dr+(cp+er)u,ds+(cq+es)u)$, the constant term of $G_2^{(cu,d+eu)\gamma_{(r,s)}}(\tau)$ is equal to $0$ unless $dr+(cp+er)u\equiv 0\pmod{N}$ by \cite[Eq.(4.23)]{DS}. Since $(cu,d+eu)\in(\Z/N\Z)^2$ is of order $N$, so is $(cu,d+eu)\gamma$.   But if $dr+(cp+er)u\equiv 0\pmod{N}$, then $(cu,d+eu)\gamma$ cannot be of order $N$, because $\gcd(ds+(cq+es)u,N)>1$.
Thus the constant term of $G_2^{(cu,d+eu)\gamma_{(r,s)}}(\tau)$ is equal to $0$ and hence so is the constant term of $G_2^{\varphi,\bar\varphi}|_2\gamma_{(r,s)}(\tau)$.

The next case to consider is when $s=1$ and $u\nmid r$. In this case, we may write
$\gamma=\sm 1&0\\r&1\esm$ for $r=1,2,\dots,N$.
For these $\g$,  $(cu,d+eu)\gamma=(dr+(c+er)u,d+eu)$.
If $dr+(c+er)u\equiv 0\pmod{N}$, then 
$u|\gcd((c+er)u,N)$ and $\gcd((c+er)u,N)|dr$, which implies $u|dr$.
But this cannot occur, because $\gcd(d,N)=1$ and $u\nmid r$. Hence, again by \cite[Eq.(4.23)]{DS}, the constant term of $G_2^{(cu,d+eu)\gamma}(\tau)$ is equal to $0$, and thus so is the constant term of $G_2^{\varphi,\bar\varphi}|_2\gamma(\tau)$.

Finally, we assume that $s=1$ and $u|r$, that is,  $\gamma=\sm 1&0\\ \ell u&1\esm$ for $\ell=1,2,\dots,u$. It follows from $(cu,d+eu)\g =(cu+d\ell u,d+eu)$ and \cite[Eq.(4.23)]{DS} that a non-zero constant term of $G_2^{\varphi,\bar\varphi}|_2\gamma(\tau)$ may occur only when $cu+d\ell u\equiv 0\pmod{N}$. 
Since $\gcd(c,N)=1$, $c+d\ell \equiv 0\pmod{u}$ implies that $\gcd(\ell,N)=1$, and hence $c=-d\ell$.
Thus for each $d,\ell\in(\Z/u\Z)^*$, we have
$$G_2^{\varphi,\bar\varphi}|_2\sm 1&0\\ \ell u&1\esm (\tau)=\sum_{e=0}^{u-1}\sum_{d\in(\Z/u\Z)^*}\varphi(-\ell d^2)G_2^{(0,d+eu)}(\tau).$$
By \cite[Eq.(4.23)]{DS}, the constant term of $G_2^{\varphi,\bar\varphi}|_2\sm 1&0\\ \ell u&1\esm (\tau)$ is then
\begin{equation*}
\sum_{e=0}^{u-1}\sum_{d\in(\Z/u\Z)^*}\varphi(-\ell d^2)\zeta_N^{d+eu}(2)
=\sum_{d\in(\Z/u\Z)^*}\varphi(-\ell d^2)\zeta_u^{d}(2),
\end{equation*}
where $\zeta_t^{a}(2)=\displaystyle\sum_{m\equiv a\pmod{t}}\frac{1}{m^2}$ for $a$ not congruent to $0$ modulo $t$.
Hence using \eqref{neis} and \cite[Lemma 3.6]{KK} in turn, we find that the constant term of $E_2^{\varphi,\bar\varphi}|_2\sm 1&0\\ \ell u&1\esm (\tau)$ is 
\begin{equation}\label{cte}
-\frac{u^2}{4\pi^2g(\varphi)}\sum_{d\in(\Z/u\Z)^*}\varphi(-\ell d^2)\zeta_u^{d}(2)=-\frac{1}{2g(\varphi)}\sum_{d\in(\Z/u\Z)^*}\varphi(-\ell d^2)\frac{1}{1-\cos(2d\pi/u)}.
\end{equation}

Now we are ready to compute the constant terms of $E_2^{\varphi,\bar\varphi}(\tau)$ at cusps $\frac{e}{v}$ with $v|N$ and $e>1$.
First suppose all prime factors of $N$ divide $v$.
For $\gamma=\sm e&f\\v&h\esm\in\G(1)$ such that $\g(\i)=\frac{e}{v}$, the congruence $hx\equiv v\pmod{N}$ has a unique solution $x=r$, because $\gcd(h,v)=1$ and $\gcd(h,N)=1$.
Then 
\begin{align*}
\begin{pmatrix}e&f\\v&h\end{pmatrix}=\begin{pmatrix}e-fr&f\\ v-hr&h\end{pmatrix}\begin{pmatrix}1&0\\r&1\end{pmatrix}.
\end{align*}
Thus $\gamma=\delta\sm 1&0\\ r&1\esm$ for some $\delta\in\Gamma_0(N)$.  Here $v|r$.
Moreover, we note that $u|v$ if and only if $u|r$.
Hence by the arguments above, the constant term of $E_2^{\varphi,\bar\varphi}|_2\gamma(\tau)$ may not be zero only when $r=\ell u$ or equivalently  $v=t u$ for some $t |u$. But since $\gcd(\ell,u)=1$, $t=1$. 
If we take $h^{-1}$ modulo $N$, then $h^{-1}\equiv e\pmod{v}$, because $eh-vf=1$ and $v|N$. Hence  $r\equiv h^{-1}v\equiv h^{-1}\ell u\equiv (e+n\ell u)\ell u\pmod{N}$ for some $n$. That is, $r\equiv ev\pmod{N}$.
Consequently, we have a non-zero constant term of $E_2^{\varphi,\bar\varphi}(\tau)$ at the cusp $\frac{e}{v}$ only when $v=u$, which is the constant term of $E_2^{\varphi,\bar\varphi}|_2{\sm 1&0\\ eu&1\esm}(\tau)$ that is computed in \eqref{cte}. 

Next, we suppose a prime factor $p$ of $N$ does not divide $v$.
Since $\gcd(ep,v)=1$, there exist $f,h$ so that $eph-fv=1$.
Then $\gamma=\sm e&f\\v&ph\esm\in\G(1)$ satisfies $\g(\i)=\frac{e}{v}$.
But $\gamma$ is not of the form $\delta\sm 1&0\\ r&1\esm$ for any $\delta\in\Gamma_0(N)$, because as $p|\gcd(ph,N)$, $\gcd(ph,N)\nmid v$, which means the congruence $phx\equiv v\pmod{N}$ has no solution.
Thus $\gamma=\delta\gamma_{(r,s)}$ for some $\delta\in\Gamma_0(N)$ and $s>1$, which implies that the constant term of $E_2^{\varphi,\bar\varphi}|_2\gamma(\tau)$ is equal to $0$.
\end{proof}

Finally, we compute the constant term $c_{\frac{e}{v}}$ of $E_2^{\varphi,\bar\varphi}(t\tau)$ at the cusp $\frac{e}{v}$ under the condition $1<tu^2|N$ and $v|N$.

\begin{lem}\label{cusp1} Let $\mathfrak s=\frac{e}{v}$ be a cusp on $X_0(N)$ and $M|N$.
Then the cusp $\mathfrak s$ is $\Gamma_0(M)$-equivalent to $\mathfrak s'=\frac{me}{v'}$ with $v'=\gcd(v,M)$ and $m=\frac{v}{v'}$.
Moreover, if $f$ is a modular form of weight $k$ on $\Gamma_0(M)$,
then the constant term of $f(\tau)$ at $\mathfrak s$ is the same as the constant term of $f(\tau)$ at $\mathfrak s'$.
\end{lem}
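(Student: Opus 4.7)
The plan is to construct an explicit element $\gamma\in\Gamma_0(M)$ implementing the transformation from $\mathfrak{s}$ to $\mathfrak{s}'$, and then to deduce the equality of constant terms by $\Gamma_0(M)$-invariance of $f$.

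First I record the arithmetic input. Writing $v=v'm$ with $v'=\gcd(v,M)$ and comparing valuations prime by prime yields $\gcd(m,M/v')=1$; since $\gcd(e,v)=1$, we also have $\gcd(e,v')=\gcd(e,m)=1$. These coprimalities allow B\'ezout to furnish integers $\alpha,\beta$ with $\alpha m-\beta(M/v')=1$, which will underwrite the matrix construction below.

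Next I would build $\gamma=\begin{pmatrix}A&B\\C&D\end{pmatrix}$ in $\Gamma_0(M)$ satisfying $\gamma(e/v)=me/v'$. Cross-multiplying in $\mathbb{P}^1(\mathbb{Q})$ turns this into the identity $v'(Ae+Bv)=me(Ce+Dv)$, while simultaneously one needs $M\mid C$ and $AD-BC=1$. Reducing the identity modulo $v'$ and modulo $m$, and using $\gcd(e,v')=\gcd(e,m)=1$, forces $m\mid A$ together with a divisibility on $C$ that is compatible with $M\mid C$. Parametrizing $A=mA_0$ and $C=(M/v')\cdot C_0$ converts the identity and the determinant condition into a linear system in the free parameters $A_0,B,C_0,D$ whose solvability over $\mathbb{Z}$ is exactly the content of $\gcd(m,M/v')=1$; the B\'ezout pair $(\alpha,\beta)$ supplies an explicit solution. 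This produces the desired $\gamma\in\Gamma_0(M)$ with $\gamma(\mathfrak s)=\mathfrak s'$.

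With such a $\gamma$ in hand, fix $\gamma_{\mathfrak s}\in SL_2(\mathbb{Z})$ with $\gamma_{\mathfrak s}(\infty)=\mathfrak s$ and set $\gamma_{\mathfrak s'}:=\gamma\gamma_{\mathfrak s}$; then $\gamma_{\mathfrak s'}(\infty)=\mathfrak s'$, and
\[
f|_k\gamma_{\mathfrak s'}=(f|_k\gamma)|_k\gamma_{\mathfrak s}=f|_k\gamma_{\mathfrak s}
\]
by $\Gamma_0(M)$-invariance of $f$ (since $\gamma\in\Gamma_0(M)$). Comparing $q^0$-coefficients on both sides yields the equality of the constant terms of $f$ at $\mathfrak s$ and at $\mathfrak s'$, which is the assertion of the lemma.

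I expect the main obstacle to be the explicit matrix construction in the second step: juggling the three simultaneous conditions ($M\mid C$, $AD-BC=1$, and the cross-multiplication identity) requires careful bookkeeping, and the single nontrivial arithmetic fact on which solvability rests is the coprimality $\gcd(m,M/v')=1$ inherited from the factorization $v=v'm$.
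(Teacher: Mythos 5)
Your second half (setting $\gamma_{\mathfrak s'}=\gamma\gamma_{\mathfrak s}$ and comparing constant terms of $f|_k\gamma_{\mathfrak s'}=(f|_k\gamma)|_k\gamma_{\mathfrak s}=f|_k\gamma_{\mathfrak s}$) is correct and is exactly what the paper does. The gap is in the first half, which is where all the content lies. First, the construction is only a plan, and one of its announced intermediate steps is wrong: reducing $v'(Ae+Bv)=me(Ce+Dv)$ modulo $m$ and using $\gcd(e,m)=1$ only yields $\frac{m}{\gcd(m,v')}\mid A$, not $m\mid A$. Second, and more seriously, the target you aim at is unattainable in general. For any $\gamma=\left(\begin{smallmatrix}A&B\\C&D\end{smallmatrix}\right)\in\Gamma_0(M)$ the denominator $Ce+Dv$ of $\gamma(e/v)$ lies in $\gcd(Me,v)\mathbb{Z}=v'\mathbb{Z}$ and is coprime to the numerator, so every $\Gamma_0(M)$-translate of $e/v$, written in lowest terms, has denominator a multiple of $v'$. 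If $k:=\gcd(m,v')>1$, the point $me/v'\in\mathbb{P}^1(\Q)$ reduces to denominator $v'/k<v'$, so your system ($M\mid C$, $AD-BC=1$, projective equality with $me/v'$) has \emph{no} solution; e.g.\ for $e/v=1/27$ and $M=9$ one gets $v'=9$, $m=3$, and $1/27$ is not $\Gamma_0(9)$-equivalent to the rational number $3/9=1/3$. So the coprimality $\gcd(m,M/v')=1$, which you correctly establish, is not the only thing solvability rests on, and insisting on exact equality in $\mathbb{P}^1(\Q)$ is the wrong formulation of the goal.

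The paper avoids this by aiming lower: it solves the single B\'ezout identity $cMe+dv=v'=\gcd(Me,v)$ (using a theorem of Lee to arrange $\gcd(d,Me)=1$, so that the pair $(cM,d)$ can be completed to $\delta=\left(\begin{smallmatrix}a&b\\cM&d\end{smallmatrix}\right)\in\Gamma_0(M)$), and then only checks that $\delta(\mathfrak s)=\frac{ae+bv}{v'}$ has denominator exactly $v'$ and numerator congruent to $me$ modulo $g=\gcd(v',M/v')$, via $a\equiv d^{-1}\equiv m\pmod g$. The identification with $\mathfrak s'$ then goes through the standard classification of cusps of $\Gamma_0(M)$ (same denominator dividing $M$, numerators agreeing mod $g$), not through equality of rational numbers. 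To repair your argument you would either need to restrict to the case $\gcd(m,v')=1$ and actually carry out the parametrization (where the extra coprimality $\gcd(m,v'/g)=1$ is also needed to adjust the B\'ezout pair), or reformulate the target cusp as ``denominator $v'$, numerator $\equiv me\pmod g$'' as the paper does.
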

\begin{proof} Since $\gcd(Me,v)=v'$, we can choose relatively prime integers $c$ and $d$ such that 
\begin{equation}\label{vv'}
cMe+dv=v',
\end{equation} 
from which we have $ce\frac{M}{v'}+dm=1$. Thus if we let $g=\gcd(v',\frac{M}{v'})$, then we have $dm\equiv 1 \pmod g$.
On the other hand, applying \cite[Theorem A]{L} in \eqref{vv'}, we may assume $\gcd(d,Me)=1$.
Then $\gcd(cM,d)=1$, and we can choose $a$ and $b$ so that $ad-cbM=1$.
Since $a\equiv d^{-1}\equiv m \pmod g$, we find that $\mathfrak s'=\delta \mathfrak s$ where $\delta:=\sm a&b\\cM&d\esm\in\Gamma_0(M)$. 

 Assume that $\gamma'(\infty)=\mathfrak s'$ for $\g'\in\G(1)$. Since  $\mathfrak s'=\delta \mathfrak s$, $\delta^{-1}\gamma'(\infty)=\mathfrak s$, and hence the constant term of $f|_k\delta^{-1}\gamma'$ is the same as the constant term of $f|_k\gamma'$.
\end{proof}

Note that $E_2^{\varphi,\bar\varphi}(t\tau)\in \mathcal E_2(M)$ when $M=tu^2|N$. Hence by Lemma \ref{cusp1}, it suffices to compute the constant term of $E_2^{\varphi,\bar\varphi}(t\tau)$ at the cusps of $\G_0(M)$ in order to obtain  the constant term at the cusps of $\G_0(N)$.

\begin{prop}\label{ctt} Let $N=tu^2$ for some positive integer $u$. Let $\varphi$ be a primitive Dirichlet character modulo $u$ and let $\frac{e}{v}$ with $v|N$ be any cusp of $X_0(N)$. Then the constant term $c_{\frac{e}{v}}$ of $E_2^{\varphi,\bar\varphi}(t\tau)$ at $\frac{e}{v}$ is given by
\begin{align*}
c_{\frac{e}{v}}=\begin{cases}\displaystyle
-\frac{1}{2tg(\varphi)}\sum_{d\in(\Z/u\Z)^*}\varphi(-etd^2/\gcd({v},t))\frac{1}{1-\cos(2d\pi/u)},& v=u\gcd(v,t),\\
0,&\hbox{otherwise}.\end{cases}
\end{align*}
\end{prop}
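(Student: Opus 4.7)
The plan is to reduce the computation to Proposition \ref{ct} via a matrix factorization of $\sm t&0\\0&1\esm\g$, mirroring the strategy used in the proof of Proposition \ref{ede}. First I would apply Lemma \ref{cusp1} with $M = tu^2$ to replace $\frac{e}{v}$ by a $\G_0(M)$-equivalent representative with denominator dividing $tu^2$; since $E_2^{\varphi,\bar\varphi}(t\t) \in \mathcal{E}_2(M)$, this preserves the constant term, so I may assume $v \mid tu^2$ from the start. Picking $\g = \sm e&f\\v&h\esm \in SL_2(\Z)$ with $\g(\infty) = \frac{e}{v}$, the slash convention of Proposition \ref{ede} (under which $dE_2(d\t)|_2\g = E_2|_2\sm d&0\\0&1\esm\g$) gives
\[
E_2^{\varphi,\bar\varphi}(t\t)\big|_2\g \;=\; \frac{1}{t}\,E_2^{\varphi,\bar\varphi}\big|_2\sm t&0\\0&1\esm\g,
\]
reducing the problem to evaluating the right-hand side at $i\infty$.

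Next, following exactly the factorization in the proof of Proposition \ref{ede}, I would set $s = \gcd(t,v)$, $t_1 = t/s$, $v_1 = v/s$, pick $a, b \in \Z$ with $at_1 e - bv_1 = 1$ (available because $\gcd(t_1 e, v_1) = 1$), and decompose
\[
\sm t&0\\0&1\esm\sm e&f\\v&h\esm \;=\; \a\b,\qquad \a := \sm t_1 e & b\\ v_1 & a\esm \in SL_2(\Z),\quad \b := \sm s & k\\ 0 & t_1\esm,
\]
for the unique integer $k$ making the identity hold. The crucial structural point is that $v \mid tu^2$ together with $\gcd(v_1,t_1)=1$ forces $v_1 \mid u^2$, so $\a(\infty) = \frac{t_1 e}{v_1}$ is a bona fide cusp of $X_0(u^2)$ and Proposition \ref{ct} applies directly to $E_2^{\varphi,\bar\varphi}\big|_2\a$. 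That proposition produces a nonzero constant only when the reduced denominator is $u$, i.e.\ when $v_1 = u$, which is equivalent to the stated condition $v = u\gcd(v,t)$; in the complementary case $c_{e/v} = 0$, establishing the second line of the formula.

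When $v = u\gcd(v,t)$, Proposition \ref{ct} with cusp numerator $t_1 e$ in place of $e$ delivers
\[
C_0 \;:=\; -\frac{1}{2g(\varphi)}\sum_{d\in(\Z/u\Z)^*}\varphi(-t_1 e d^2)\frac{1}{1-\cos(2d\pi/u)}
\]
as the constant term of $E_2^{\varphi,\bar\varphi}\big|_2\a$ at $i\infty$. Propagating this through $|_2\b$ via the direct formula $(g|_2\b)(\t) = (s/t_1)\,g((s\t+k)/t_1)$ shows that the constant term of $E_2^{\varphi,\bar\varphi}\big|_2\a\b$ equals $(s/t_1)\,C_0$, after which the normalization factor $1/t$ combined with the rewriting $\varphi(-t_1 e d^2) = \varphi(-etd^2/\gcd(v,t))$ (using $t_1 = t/\gcd(v,t)$) produces the stated prefactor. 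The chief obstacle is the careful bookkeeping of determinant factors through the non-$SL_2(\Z)$ matrix $\b$, together with the arithmetic step deducing $v_1 \mid u^2$ from $v \mid tu^2$ and $\gcd(t_1,v_1)=1$; once these are in hand, the result follows by direct substitution.
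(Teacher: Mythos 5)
Your strategy coincides with the paper's: write $\g=\sm e&f\\v&h\esm$, factor $\sm t&0\\0&1\esm\g=\a\b$ with $\a=\sm t_1e&b\\v_1&a\esm\in SL_2(\Z)$ and $\b=\sm s&k\\0&t_1\esm$ exactly as in the proof of Proposition \ref{ede}, reduce to Proposition \ref{ct} at the cusp $\frac{t_1e}{v_1}$, and read off the vanishing condition $v_1=u$, i.e.\ $v=u\gcd(v,t)$. Two small remarks: your opening appeal to Lemma \ref{cusp1} is redundant here, since the proposition already assumes $N=tu^2$ and $v\mid N$ (the lemma is used by the paper only to pass from a general level $N$ with $tu^2\mid N$ down to level $tu^2$); and your verification that $v_1\mid u^2$, so that $\frac{t_1e}{v_1}$ really is a cusp of $X_0(u^2)$, is a correct point that the paper leaves implicit.

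The gap is in your last step. You correctly record that $|_2\b$ contributes the factor $s/t_1$ (indeed $\det\b=st_1=t$ and the lower-right entry is $t_1$, and the substitution $\t\mapsto(s\t+k)/t_1$ does not change the value at $i\i$), but then $\frac{1}{t}\cdot\frac{s}{t_1}=\frac{1}{t_1^2}=\frac{\gcd(v,t)^2}{t^2}$, which equals the stated prefactor $\frac1t$ only when $t=\gcd(v,t)^2$. So the claim that these factors ``produce the stated prefactor'' is an arithmetic non sequitur, and your computation, carried out consistently, yields $\frac{\gcd(v,t)^2}{t^2}$ where the proposition asserts $\frac{1}{t}$. (The paper's own proof sidesteps the issue by assigning no factor at all to the $|_2\b$ step and declaring the answer to be the constant term of $\frac1tE_2^{\varphi,\bar\varphi}$ at $\frac{et_1}{v_1}$.) You should resolve this discrepancy explicitly rather than assert it away; a useful sanity check is that for a genuine weight-$2$ form $F$ the constant term of $F(t\t)$ at $\frac{e}{v}$ is $\frac{\gcd(t,v)^2}{t^2}$ times that of $F$ at $\frac{t_1e}{v_1}$, which is exactly what makes $E_2(\t)-dE_2(d\t)$ have constant term $1-\frac{\gcd(d,v)^2}{d}$ at $\frac1v$. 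None of this affects Theorem \ref{eis}, which needs only algebraicity of the constant terms, but as a derivation of the displayed formula the final step does not go through.
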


\begin{proof}
We choose $e,f$ so that $eh-fv=1$. Then $\gamma=\begin{pmatrix}e&f\\v&h\end{pmatrix}\in \Gamma(1)$ and $\g(\i)=\frac{e}{v}.$
Let $s=\gcd(t,v)$ and write $t=t_1 s$ and $v=v_1 s$.  As in the proof of Proposition \ref{ede}, we have 
\begin{equation*}
\begin{pmatrix}t&0\\0&1\end{pmatrix}\gamma=\alpha\beta,
\end{equation*}
where $\alpha=\begin{pmatrix}t_1e&b\\v_1& a\end{pmatrix}\in\Gamma(1)$ and $\beta=\begin{pmatrix}s&k\\0& t_1\end{pmatrix}$ with $k=atf-bh$.
Hence
$$E_2^{\varphi,\bar\varphi}(t\tau)|_2\gamma=\frac{1}{t}E_2^{\varphi,\bar\varphi}|_2\sm t&0\\0&1\esm\gamma(\tau)=\frac{1}{t}E_2^{\varphi,\bar\varphi}|_2\alpha\beta(\tau)$$
and the constant term we look for is that of $\frac{1}{t}E_2^{\varphi,\bar\varphi}(\tau)$ at the cusp $\sm et_1\\v_1\esm$.
By Proposition \ref{ct}, it is non-zero if and only if $v_1=u$, i.e. $v=us$. 
\end{proof}

We have computed the constant coefficients of basis elements of $\mathcal E_2(N)$ at each cusp in Propositions \ref{ede}, \ref{ct} and \ref{ctt} when $N$ is not square-free. Since they are all algebraic, the Fourier coefficients of $E_f(\t)$ should be algebraic by \eqref{fthetacons}, \eqref{se} and \eqref{se1}. 

\section{Proof of Theorem \ref{alg}}

First, we briefly describe how to construct basis elements $\mathfrak f_{N,m}$ of $M_0^\sharp(N)$ with algebraic Fourier coefficients  using the method of finding two generators of the function field of $\Gamma_0(N)$ introduced by Yang.
Following Yang \cite{Yang}, for $a$ not congruent to 0 modulo $N$, we define a generalized Dedekind $\eta$-function by
$$\eta_a(\tau)=q^{NB(a/N)/2}\prod_{m=1}^\infty\left(1-q^{(m-1)N+a}\right)\left(1-q^{mN-a}\right),$$
where $B(x)=x^2-x+\frac{1}{6}$.
Consider the function 
\begin{align}\label{ftau}
f(\tau)=\prod_a \eta_a(\tau)^{r_a},
\end{align}
where $a$ and $r_a$ are integers with $a\not\equiv0\pmod{N}$.
By \cite[Proposition 3]{Yang}, $f$ is a modular function of $\Gamma_1(N)$ if the following hold:
\begin{enumerate}
\item[(i)] $\sum_a r_a \equiv 0 \pmod{12}$
\item[(ii)] $\sum_a a r_a \equiv 0 \pmod{2}$
\item[(iii)] $\sum_a a^2 r_a \equiv 0 \pmod{2N}$
\end{enumerate}
We note that $f$ satisfying the conditions (i),(ii), and (iii) has integer Fourier coefficients, because $\eta_a(\tau)$ does so.
Yang proved that there are at least two functions $X$ and $Y$ of $\Gamma_1(N)$ that have poles only at $\infty$ and whose orders of poles are relatively prime.
Then $X$ and $Y$ are generators of the function field of $\Gamma_1(N)$.

To construct a modular function of $\Gamma_0(N)$ with a pole of order $m$ at $\infty$ and analytic elsewhere, Yang first found a function $f$ on $\Gamma_1(N)$ that has a pole of order $m$ at $\infty$, poles of order $< m$ at other cusps equivalent to $\infty$ under $\Gamma_0(N)$, and regular at any other points. 
Then the function
$$\sum_{\gamma\in\Gamma_0(N )/\Gamma_1(N)}f (\gamma\tau)$$
is modular of $\Gamma_0(N)$ with the desired properties.
In general, this argument holds for any intermediate subgroup $\Gamma$ between $\Gamma_1(N)$ and $\Gamma_0(N)$, which is normal in $\Gamma_0(N)$.
We note that if $f$ of \eqref{ftau} is a modular function of $\Gamma$, then $\sum_{\gamma\in\Gamma_0(N )/\Gamma_1(N)}f (\gamma\tau)$ has algebraic (most likely rational) Fourier coefficients because of the transformation formula in \cite[Proposition 2]{Yang}.

For example, we construct a basis $\{\mathfrak f_{31,m}\}$ of $M_0^\sharp(31)$.
If we find modular functions of $\Gamma_0(31)$ with unique poles of order $3,4$ and $5$ at $\infty$, 
then by using them we can construct $\mathfrak f_{31,m}$ for all $m\geq 3$.
Let $\Gamma$ be the subgroup generated by $\Gamma_1(31)$ and $\begin{pmatrix}5& -1\\31& -6\end{pmatrix}$.
Then $f_k=\frac{\eta_{6k}\eta_{26k}\eta_{30k}}{\eta_{2k}\eta_{10k}
\eta_{12k}}$ is a modular function of $\Gamma$ for any integer $k$ not divisible by $31$.
There are essentially five distinct $f_k$, and they are $f_1$, $f_2$, $f_3$, $f_4$, and $f_8$. 
Moreover, the cusp $\infty$ splits into five cusps $\frac{1}{31}$, $\frac{2}{31}$, $\frac{3}{31}$, $\frac{4}{31}$, and $\frac{8}{31}$ in $\Gamma$.
The orders of $f_k$ at those cusps are as follows:

\begin{center}
\begin{longtable}{c|ccccc}
\label{word} & $\frac{1}{31}$ & $\frac{2}{31}$ & $\frac{3}{31}$ & $\frac{4}{31}$ & $\frac{8}{31}$
 \\ \hline
$f_1$ & $3$ & $0$ & $-4$ & $2$ & $-1$
 \\ 
$f_2$ & $0$ & $2$ & $3$ & $-1$ & $-4$
 \\ 
$f_3$ & $-4$ & $3$ & $-1$ & $0$ & $2$
 \\ 
$f_4$ & $2$ & $-1$ & $0$ & $-4$ & $3$
 \\ 
$f_8$ & $-1$ & $-4$ & $2$ & $3$ & $0$
\end{longtable}
\end{center}

We solve the integer programming problem
$$\systeme{
3x_1 - 4x_3 + 2x_4 - x_5 = -3,
2x_2 + 3x_3 - x_4 - 4x_5 \geq -3,
-4x_1 + 3x_2 - x_3 + 2x_5 \geq -3,
2x_1 - x_2 - 4x_4 + 3x_5 \geq -3,
-x_1 - 4x_2 + 2x_3 + 3x_4 \geq -3,}$$
to find a solution $(x_1,x_2,x_3,x_4,x_5)=(0,0,1,1,1)$
so that we obtain
$$\sum_{\gamma\in\Gamma_0(N )/\Gamma}f_3 f_4 f_8 (\gamma\tau)=\frac{1}{q^3} + \frac{2}{q^2} + 2 - q + 3q^2 + 2q^3 + q^4 + 2q^5-\cdots,$$
which is invariant under $\Gamma_0(31)$ and has a unique pole of order $3$ at $\infty$.
Similarly, we have that 
\begin{align*}
\sum_{\gamma\in\Gamma_0(N )/\Gamma}f_3(\gamma\tau)&=\frac{1}{q^4}+\frac{1}{q^3}+\frac{1}{q^2}+\frac{1}{q}-1+q+3 q^2+q^3+q^4+\cdots \hbox{ \, and }\\
\sum_{\gamma\in\Gamma_0(N )/\Gamma}f_3 f_8(\gamma\tau)&=\frac{1}{q^5}+\frac{1}{q^4}+\frac{1}{q^3}+\frac{1}{q^2}+2+q+5 q^2+2q^3-q^4+2q^5+\cdots,
\end{align*}
which are invariant under $\Gamma_0(31)$ and have unique poles of order $4$ and $5$ at $\infty$, respectively.
By using them, we have the following basis elements with rational coefficients:
\begin{align*}
\mathfrak f_{31,3}&=\frac{1}{q^3} + \frac{2}{q^2} - q + 3q^2 + 2q^3 + q^4 + 2q^5-\cdots,\\
\mathfrak f_{31,4}&=\frac{1}{q^4}-\frac{1}{q^2}+\frac{1}{q}+2q-q^3-2q^5+\cdots,\\
\mathfrak f_{31,5}&=\frac{1}{q^5}-\frac{1}{q}+2q^2+q^3-2q^4+2q^5+\cdots\\
&\vdots
\end{align*}

Next, we establish a recurrence relation between Fourier coefficients of a meromorphic modular form and exponents in its product representation. 
\begin{prop}\label{rec}
Suppose
\begin{equation}\label{f1}
f(\t)=1+\sum_{m=1}^\infty a(m) q^m = \prod_{n=1}^\infty (1-q^n)^{c(n)}
\end{equation} is a non-zero meromorphic function in a neighborhood of $q=0$, then for each $m\geq 1$, it satisfies that
\begin{equation} \label{recursion}
c(m)= - a(m) -\frac 1m \left(\sum_{1\le u <m \atop u|m} u c(u) + \sum_{1\le k < m} a(m-k) \sum_{u|k} u c(u) \right).
\end{equation}
\end{prop}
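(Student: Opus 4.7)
The plan is to obtain \eqref{recursion} by equating two expressions for $\Theta(f)$: one coming directly from the Fourier expansion in \eqref{f1}, and the other coming from the logarithmic derivative of the product representation. Note that since the constant term of $f$ is $1$, we are in the situation of \eqref{divexp} with $h = 0$, so that identity gives
\begin{equation*}
\frac{\Theta(f)}{f} = -\sum_{n=1}^{\infty}\Bigl(\sum_{d\mid n} d\, c(d)\Bigr) q^n.
\end{equation*}

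First I would multiply both sides by $f$ to get
\begin{equation*}
\Theta(f) = -\Bigl(1 + \sum_{r=1}^{\infty} a(r) q^r\Bigr)\sum_{k=1}^{\infty}\Bigl(\sum_{u\mid k} u\, c(u)\Bigr) q^k.
\end{equation*}
On the left-hand side we have $\Theta(f) = \sum_{m=1}^\infty m\, a(m) q^m$. Carrying out the Cauchy product on the right and reading off the coefficient of $q^m$ yields, for every $m \geq 1$,
\begin{equation*}
m\, a(m) \;=\; -\sum_{u\mid m} u\, c(u) \;-\; \sum_{k=1}^{m-1} a(m-k)\sum_{u\mid k} u\, c(u).
\end{equation*}

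Next I would split off the $u = m$ term from $\sum_{u\mid m} u\, c(u)$, writing it as $m\, c(m) + \sum_{u\mid m,\, u < m} u\, c(u)$. Moving the $m\, c(m)$ term to the left and the $m\, a(m)$ term to the right, then dividing through by $m$, produces exactly \eqref{recursion}. All other terms on the right involve only $c(u)$ with $u<m$ and $a(r)$, so the formula indeed determines $c(m)$ recursively from the $a(r)$'s and the previously computed $c(u)$'s.

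There is no real obstacle; the only point requiring slight care is justifying the termwise manipulations, which is legitimate because $f$ is meromorphic in a neighborhood of $q=0$ with $f(0)=1$, so both $\log f$ and $\Theta(f)/f$ are holomorphic power series on a punctured disk around the origin, and the product $\prod_n (1-q^n)^{c(n)}$ converges in the usual $q$-adic (formal) sense with respect to which the identity \eqref{divexp} was established in \cite[Proposition 2.1]{BKO}.
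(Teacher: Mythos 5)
Your proposal is correct and follows essentially the same route as the paper: both arguments amount to computing the logarithmic derivative $\Theta(f)/f$ from the product expansion (the paper re-derives this via $V=\exp(-\sum_m\sum_{u\mid m}uc(u)q^m/m)$ and the identity $V\,\Theta(\log V)=\Theta(V)$, while you simply invoke the already-stated formula \eqref{divexp} with $h=0$), then clearing the denominator, comparing coefficients of $q^m$, and isolating the $u=m$ term to obtain \eqref{recursion}. No substantive difference.
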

\begin{proof}
Following the method in 
\cite[Section 4]{Kim} 
we observe that
\begin{align*}
\prod_{n=1}^\infty (1-q^n)^{c(n)} & = \prod_{n=1}^\infty\exp (\log (1-q^n)^{c(n)} ) = \exp (\sum_{n=1}^\infty c(n) \log (1-q^n))  
= \exp (-\sum_{n=1}^\infty c(n) \sum_{m=1}^\infty (q^n)^m/m) \\
& = \exp (-\sum_{m=1}^\infty \sum_{n=1}^\infty  n c(n)q^{nm}/(nm))
= \exp (-\sum_{m=1}^\infty \sum_{u|m}  u c(u)q^{m}/m) =:V.
\end{align*}
We note that $\Theta(\log V) = \Theta(V)/V$. 
Since $$V= 1+\sum_{m=1}^\infty a(m) q^m =  \exp (- \sum_{m=1}^\infty \sum_{u|m}  u c(u)q^{m}/m), $$ the relation $V\Theta(\log V) = \Theta(V)$ gives rise to 
$$
( \sum_{m=1}^\infty \sum_{u|m}  u c(u)q^{m})(1+\sum_{m=1}^\infty a(m) q^m )= - \sum_{m=1}^\infty m a(m) q^m. 
$$
Comparing the coefficients of $q^m$ on both sides, we get 
$$
\sum_{u|m} u c(u) + \sum_{1\le k < m} a(m-k) \sum_{u|k} u c(u) = - m a(m),
$$ 
which renders the desired recursion.
\end{proof}

Now we begin to prove Theorem \ref{alg}.
It follows from \eqref{divf}, Theorem \ref{eis},  Proposition \ref{rec} and algebraicity of coefficients of $\f_{N,n}$} that
\begin{equation}\label{alg11}
\sum_{z\in\F_N} e_{N,z} ord_z(f)\f_{N,n}(z)
\ \mathrm{is \ algebraic}.
\end{equation}

Before all else, we consider the case when the genus is $1$. It is well known that the values of $\f_{N,2}(z)$ and $\f_{N,3}(z)$ at elliptic points $z$ are algebraic by Shimura Reciprocity Theorem \cite[Theorem 15.12]{Cox} and there are only a finite number of elliptic points of $X_0(N)$.  
Hence if $z_1, z_2,\dots z_\ell$ are all of non-elliptic points of $X_0(N)$ for which $ord_z(f)\neq 0$ (not necessarily distinct), we find from \eqref{alg11}  that for every $n\geq 2$, 
\begin{equation}\label{alg12}\sum_{s=1}^\ell \f_{N,n}(z_s)=\sum_{s=1}^\ell Q_n(\f_{N,2}(z_s),\f_{N,3}(z_s) )\ \mathrm{is \ algebraic},\end{equation}
where $Q_n(X,Y)$ is a monic polynomial in $X$ and $Y$ with algebraic coefficients.
Using induction on $n+m$ ($n,m\geq 0$) with \eqref{alg12}, one can show that
$$\sum_{s=1}^\ell \f_{N,2}^n(z_s)\f_{N,3}^m(z_s)\ \mathrm{are \ algebraic}$$
for all $m\geq 0$ and $n\geq 0$. %
In particular, for every positive integer $n$,
$\displaystyle{\sum_{s=1}^\ell \f_{N,2}^n(z_s)}$ and  $\displaystyle{\sum_{s=1}^\ell \f_{N,3}^n(z_s)}$ are algebraic.
Thus by solving for the elementary symmetric functions in $\f_{N,i}(z_1)$,  $\f_{N,i}(z_2)$,\dots,  $\f_{N,i}(z_\ell)$ for $i=1,2$, we find that
$\displaystyle{\prod_{s=1}^\ell (x-\f_{N,2}(z_s))}$ and $\displaystyle{\prod_{s=1}^\ell (x-\f_{N,3}(z_s))}$ are polynomials with algebraic coefficients. Therefore for every $z_0\in\F_N$ for which $\textrm{ord}_{z_0}(f)\neq 0$, $\f_{N,2}(z_0)$ and $\f_{N,3}(z_0)$ are both algebraic, and hence $\f_{N,n}(z_0)=Q_n(\f_{N,2}(z_0),\f_{N,3}(z_0) )$ are algebraic for all $m\geq2$.

For a general genus $g$, we use the fact
$$\f_{N,n}(z_s)=Q_n(\f_{N,g+1}(z_s),\f_{N,g+2}(z_s),\dots,\f_{N,2g+1}(z_s))$$ for some polynomial $Q_n(X_1,X_2,\cdots,X_{g+1})$ with algebraic coefficients and the same arguments above to show $\f_{N,n}(z_0)$ are algebraic for all $n\geq g+1$. 

\section{Examples}
\begin{exam}\label{11} Consider a weight $2$ modular form on $\G_0(11)$
$$f(\t):=- \frac{1}{10}(E_2(\t)-11E_2(11\t) +24\Delta_{11}(\t)),$$ where
$\Delta_{11}(\t):=\fg_{11,-1}(\t)$ is the unique normalized cusp form on $\G_0(11)$. This function is related with $Z_{K3}(z;\t)$, the elliptic genus of $K3$. More precisely, the twisted elliptic genus of $11A$ type conjugacy class of Mathieu group $M_{24}$ is given by \cite[Eq. (2.13)]{EH}
$$Z_{11A}(z;\t)=\frac{1}{12}Z_{K3}(z;\t)-\frac{11}{6}f(\t)\frac{\theta_{11}(z;\t)^2}{\eta(\t)^6},$$
where $\theta_{11}(z;\t)$ is the Jacobi theta function defined by $\theta_{11}(z;\t)=\sum_{n\in\Z}q^{\frac12(n+\frac12)^2}e^{2\pi i (n+\frac12)(z+\frac12)}$. Suppose $z_1$ and $z_2$ are zeros of $f(\t)$ in the fundamental domain $\mathcal F_{11}$, then the algebraic values $\f_{11,2}(z_1)$ and $\f_{11,2}(z_2)$ are roots of a quadratic polynomial $X^2+22X+233$.
\end{exam}
\begin{proof} 
By valence formula, $f$ has two zeros in $X_0(11)$. Moreover, applying the Residue Theorem to the 1-form $f(\t)d\t$, we find that 
\begin{align*}
& (\hbox{width of the cusp $\infty$}) \cdot (\hbox{constant term of $f(\t)$ at the cusp $\infty$}) \\
& +
(\hbox{width of the cusp $0$}) \cdot (\hbox{constant term of $f(\t)$ at the cusp $0$})
=0.
\end{align*} 
Hence we find that the constant term of $f(\t)$ at the cusp $0$ is  $- \frac{1}{11}$, and thus $f$ has two zeros, say $z_1$ and $z_2$ in $Y_0(11)$. 

On the other hand, if we write $$f(\t)=1+\sum_{m=1}^\infty a(m) q^m = \prod_{n=1}^\infty (1-q^n)^{c(n)},$$ 
then from the initial values $a(0)=1$, $a(1)=0$, $a(2)=12$, $a(3)=12$, $a(4)=12$, $\dots$, and Proposition  \ref{rec}, we have $c(1)=0$, $c(2)=-12$, $c(3)=-12$, $c(4)=66$, etc. 
Since $\mathcal E_2(11)$ is spanned by $E_2(\t)-11E_2(11\t)$ whose constant term at the cusp $0$ is $1-\frac{\gcd(11,1)^2}{11}=\frac{10}{11}$ and the constant term of $f_\theta$ at the cusp $0$ is $-\frac{1}{6}$ by \eqref{cons}, it is clear by \eqref{se} and \eqref{se1} that
$$E_f(\t)=-\frac{11}{60}(E_2(\t)-11E_2(11\t))=\sum_{n=0}^\i \ve_f(n)q^n=-\frac{1}{6}-\frac{2}{5}q-\frac{6}{5}q^2-\frac{8}{5}q^3-\frac{14}{5}q^4+\cdots.$$  
It then follows from \eqref{divf} with $n=2, 3, 4$ that 
$$
\f_{11,2}(z_1)+\f_{11,2}(z_2)=\sum_{d|2}c(d) d-12+\ve_{f}(2)+(c(1)-4+\ve_{f}(1) ) a_{11}(2,-1)=-22
$$
$$
\f_{11,3}(z_1)+\f_{11,3}(z_2)=\sum_{d|3}c(d) d-16+\ve_{f}(3)+(c(1)-4+\ve_{f}(1) ) a_{11}(3,-1)=-34,
$$
and
$$
\f_{11,4}(z_1)+\f_{11,4}(z_2)=\sum_{d|4}c(d) d-28+\ve_{f}(4)+(c(1)-4+\ve_{f}(1) ) a_{11}(4,-1)=242,
$$
because $a_{11}(2,-1)=2$, $a_{11}(3,-1)=1$, and $a_{11}(4,-1)=-2$.

Since $\f_{11,4}=\f_{11,2}^2 -4 \f_{11,3} -4 \f_{11,2}$, we have 
$$
x_1^{''}+ x_2^{''}=x_1^2+x_2^2- 4 (x_1^{'} + x_2^{'}) -4(x_1+x_2),
$$
where $x_i=\f_{11,2}(z_i)$, $x_i^{'}=\f_{11,3}(z_i)$ and $x_i^{''}=\f_{11,4}(z_i)$ for $i=1,2$.
We then obtain that $x_1^2+x_2^2=18$ and therefore $x_1 x_2=233$. 
Thus we conclude that $x_1=\f_{11,2}(z_1)$ and $x_2=\f_{11,2}(z_2)$ are roots of a quadratic polynomial $X^2+22X+233$. 
\end{proof}

\begin{exam}
For an example of a non square-free level, we take a look at the case when level $N=27$. We denote the cusps of $\G_0(27)$ except for the infinity by $\mathfrak s_1=0$, $\mathfrak s_2=\frac 13$, $\mathfrak s_3=\frac19$, $\mathfrak s_4=\frac23$, and $\mathfrak s_5=\frac29$.
We also let  $d_1=1$, $d_2=3$, $d_3=9$, and $d_4=27$.

Let $\varphi$ be a primitive Dirichlet character of conductor 3. Then 
$E^{(1)}(\t):=E_2(\tau)-3E_2(3\tau)$, $E^{(2)}(\t):=E_2(\tau)-9E_2(9\tau)$, $E^{(3)}(\t):=E_2(\tau)-27E_2(27\tau)$, $E^{(4)}(\t):=E_2^{\varphi,\bar\varphi}(\tau)$, and $E^{(5)}(\t):=E_2^{\varphi,\bar\varphi}(3\tau)$
form a basis for the Eisenstein space $\mathcal E_2(27)$.

According to Proposition \ref{ede}, the constant terms of $E^{(i)}(\t)$ at $\mathfrak s_j$ for $1\leq i\leq 3$ and $1\leq j\leq 3$ are
$$1-\frac{\gcd(d_{i+1},d_j)^2}{d_{i+1}}$$
and the constant terms of $E^{(i)}(\t)$ at $\mathfrak s_j$ for $1\leq i\leq 3$ and $j=4,5$ are
$$1-\frac{\gcd(d_{i+1},d_{j-2})^2}{d_{i+1}}.$$
Also, by Proposition \ref{ct} and Lemma \ref{cusp1}, the constant terms of $E^{(4)}(\t)$ at $\frak s_j$ for $1\leq j\leq 5$ are equal to $0$, $-\frac{2\sqrt{3}}{9}i$, $-\frac{2\sqrt{3}}{9}i$, $\frac{2\sqrt{3}}{9}i$ and $\frac{2\sqrt{3}}{9}i$, respectively. 
By Proposition \ref{ctt}, the constant terms of $E^{(5)}(\t)$ at $\frak s_3$ and $\frak s_5$ are equal to $-\frac{2\sqrt{3}}{27}i$ and $\frac{2\sqrt{3}}{27}i$, respectively, and $0$ at other cusps. 

Therefore, for a weight $k$ meromorphic modular form $f$ on $\G_0(27)$, solving the linear system in \eqref{se}, we have 
$$E_f(\t)=\a_1E^{(1)}(\t)+\a_2E^{(2)}(\t)+\a_3E^{(3)}(\t)+\a_4E^{(4)}(\t)+\a_5E^{(5)}(\t),$$
where
\begin{align*}
\a_1=&\frac{3}{8}c_1-\frac{5}{24}c_2+\frac{1}{48}c_3-\frac{5}{24}c_4+\frac{1}{48}c_5\\
\a_2=&-\frac{3}{8}c_1+\frac{1}{48}c_2-\frac{1}{12}c_3+\frac{1}{48}c_4-\frac{1}{12}c_5\\
\a_3=&\frac{9}{8}c_1+\frac{1}{8}c_1+\frac{1}{16}c_3+\frac{1}{8}c_4+\frac{1}{16}c_5\\
\a_4=&\frac{3\sqrt{3}i}{4}c_2-\frac{3\sqrt{3}i}{4}c_4\\
\a_5=&-\frac{9\sqrt{3}i}{4}c_2+\frac{9\sqrt{3}i}{4}c_3+\frac{9\sqrt{3}i}{4}c_4-\frac{9\sqrt{3}i}{4}c_5
\end{align*}
and $c_j$ is the constant term of $f_\theta$ at the cusp $\mathfrak s_j$ ($1\leq j\leq 5$).
\end{exam}

\begin{exam}\label{27} Let
$$f(\t):=\f_{27,3}(\t)=\frac{\eta(3\t)^3}{\eta(27\t)^3} +3=q^{-3}\prod_{n=1}^\infty (1-q^n)^{c(n)}\in M_0^{\sharp}(27).$$ 
Suppose $z_1$, $z_2$ and $z_3$ are zeros of $f(\t)$ in the fundamental domain $\mathcal F_{27}$, then the values $\f_{27,n}(z_1)$, $\f_{27,n}(z_2)$ and $\f_{27,n}(z_3)$ for all $n\geq 2$ satisfy
\begin{equation}\label{val27}
\f_{27,n}(z_1)+\f_{27,n}(z_2)+\f_{27,n}(z_3)=\sum_{d|n}c(d) d.
\end{equation}
Furthermore, $\f_{27,n}(z_1)$, $\f_{27,n}(z_2)$ and $\f_{27,n}(z_3)$ for all $n\geq 2$ are values in the cubic field $\Q(\sqrt[3]{9})$.

\end{exam}
\begin{proof}
The following is the list of the first few basis elements of $M_0^{\sharp}(27)$.

$$\begin{array}{rcl}
\f_{27,2}(\t)&=&q^{-2}+q+2q^4-q^7+{q}^{10}-{q}^{13}+{q}^{16}-3{q}^{19}-2{q}^{22}+3{q}^{25}+3{q}^{28}+\cdots \\
\f_{27,3}(\t)&=&q^{-3}+5q^6-7{q}^{15}+3{q}^{24}+15{q}^{33}-32{q}^{42}+9{q}^{51}+58{q}^{60}-96{q}^{69}+\cdots \\
\f_{27,4}(\t)&=&q^{-4}+2q^{-1}+5q^2+2q^5+4q^8-4{q}^{11}+5{q}^{14}-10{q}^{17}-3{q}^{20}-14{q}^{23}+13{q}^{26}+\cdots \\
\f_{27,5}(\t)&=&q^{-5}+q+2q^4+7q^7+8{q}^{10}-10{q}^{13}-6{q}^{16}-18{q}^{19}+20{q}^{22}-19{q}^{25}+4{q}^{28}+\cdots  \\
\f_{27,6}(\t)&=&q^{-6}+10q^3+11{q}^{12}-64{q}^{21}+109{q}^{30}+44{q}^{39}-503{q}^{48}+744{q}^{57}+295{q}^{66}-\cdots \\
 &\vdots&
\end{array}$$

Applying Proposition \ref{rec} to $q^3f=q^3\f_{27,3}=1+5q^9-7q^{18}+\cdots=\prod_{n=1}^\infty (1-q^n)^{c(n)}$, we compute $c(1)=c(2)=\cdots=c(8)=0$ and $c(9)=-5$. Thus $f=q^{-3}(1-q^9)^{-5}\cdots.$
Then by \eqref{divexp}, we find that
$$f_\theta(\t)=-3-\sum_{n=1}\sum_{d|n}c(d) dq^n=-3+45q^9+\cdots.$$

We note that  $f$ is non-vanishing at each cusp $\mathfrak s_i$ for $1\leq i\leq 5$ by the Iseki's transformation formula \cite[Theorem 5.8.1]{CS}.
Since $f$ has neither zero nor pole at each cusp $\mathfrak s_i$ for $1\leq i\leq 5$, $ord_{{\mathfrak s}_i}(f)=0$. Hence by \eqref{cons}, $E_f(\t)=0$. Since $X_0(27)$ has no elliptic points by \cite[Corollary 3.7.2]{DS}, it follows from \eqref{gtfne} that

\begin{equation}\label{}
{F}_{27,z_1}(\t)+{F}_{27,z_2}(\t)+{F}_{27,z_3}(\t)=-f_\theta(\t)-c(1)\fg_{27,-1}(\t)=-f_\theta(\t),
\end{equation}
because $c(1)=0$.

Also, from \eqref{divf}, we obtain \begin{equation}\label{val271}
\f_{27,n}(z_1)+\f_{27,n}(z_2)+\f_{27,n}(z_3)=c(1)a_{27}(n,-1)+\sum_{d|n}c(d) d,
\end{equation}
which proves \eqref{val27}. 
The first non-zero value of this occurs when $n=9$ as
$$\f_{27,9}(z_1)+\f_{27,9}(z_2)+\f_{27,9}(z_3)=-45.$$

Let $x_i=\f_{27,2}(z_i)$ and $y_i=\f_{27,3}(z_i)$ for $i=1,2,3$. Of course, $y_1=y_2=y_3=0$.
Then since $\f_{27,4}=\f_{27,2}^2$, $\f_{27,5}=\f_{27,2}\f_{27,3}-\f_{27,2}$, and $\f_{27,6}=\f_{27,3}^2$, we obtain from \eqref{val27} that
$$\begin{array}{rl}
x_1+x_2+x_3&=0,\\
y_1+y_2+y_3&=0,\\
x_1^2+x_2^2+x_3^2&=0,\\
(x_1y_1-x_1)+(x_2y_2-x_2)+(x_3y_3-x_3)&=0,\\
y_1^2+y_2^2+y_3^2&=0.
\end{array}$$
This along with the relation $\f_{27,2}^3=\f_{27,3}^2+3\f_{27,3}+9$ yields that $x_1x_2+x_2x_3+x_1x_3=0$ and $x_1x_2x_3=9$, and hence the values $\f_{27,2}(z_1)$, $\f_{27,2}(z_2)$, and $\f_{27,2}(z_3)$ are zeros of $X^3-9$. As each $\f_{27,n}(\t)$ is generated by  $\f_{27,2}(\t)$ and $\f_{27,3}(\t)$, all values  $\f_{27,n}(z_i)$ ($i=1,2,3$ and $n\geq 2$) lie in a cubic field.
\end{proof}

\begin{exam}\label{trace}
Let  $S$ be a subset of the exact divisors of $N$ and $\G=N+S$ denote the subgroup of ${\rm PSL}_2(\R)$ generated by $\G_0(N)$ and the Atkin-Lehner involutions $W_{Q,N}$ for all $Q\in S$.  
For a positive integer $D$ that is congruent to a square modulo $4N$, we denote by $\mathcal Q_{D,N}$ the set of positive definite binary quadratic forms 
$$
Q=[a,b,c]=aX^2+bXY+cY^2 \, \, (a,b,c\in \mathbb Z, N|a)
$$
of discriminant $D$, with the usual action of the group $\G$. To each $Q\in \mathcal Q_{D,N}$, we associate its unique root $\alpha_Q\in \mathbb H$, called a {\it CM point}.
Assume $\G$ is of genus zero, and let $j_\G$ denote the corresponding Hauptmodul. Then
$$
f(\t)=\prod_{Q\in \mathcal Q_{D,N}/\G}(j_\G(\t)-j_\G(\alpha_Q))
$$
is a meromorphic modular form of weight $0$ on $\G$ with a {\it Heegner divisor}, that is, a form whose zeros and poles are all at CM points and cusps. Hence by Borcherds' isomorphism, there is a certain weight $1/2$ weakly holomorphic modular form $g$ with Fourier expansion $g(\t)=\sum_{n\geq n_0}A(n)q^n$ satisfying  $f=q^h\prod_{n=1}^\i(1-q^n)^{A(n^2)}$ with $h=-| \mathcal Q_{D,N}/\G|$.

We note that if we consider $f$ as a meromorphic modular form on $\G_0(N)$, then the left-hand side of \eqref{divf} is the $D$-th modular trace of $\f_{N,m}$, i.e. 
$$
{\rm MT}(D, \f_{N,m}):=\sum_{Q\in \mathcal Q_{D,N}/\G_0(N)} e_{N,\alpha_Q} \f_{N,m}(\alpha_Q).
$$
Therefore, by applying Theorem \ref{d} to $f$, we can express the modular trace of $\f_{N,m}$ in terms of $A(n)$ and a coefficient of a certain weight 2 modular form in the Eisenstein space on $\G_0(N)$.

For a specific example, we let $\G$ be the group generated by $\G_0(N)$ and the Fricke involution $W_N:=\sm 0&-1\\ N&0\esm$ and consider a function discussed in \cite[Theorem 1.4]{CK}, which is a weakly holomorphic modular function on $\G$. Again, $\G$ is of genus zero and $j_N^+$ denotes the corresponding Hauptmodul. 

We define the Hurwitz-Kronecker class number $H(D)$ and a class number $H_N^+(D)$  by
$$H(D)=\sum_{Q\in \mathcal Q_{D,1}/\G(1)} e_{1,\alpha_Q}
\quad
{\rm and}\quad  
H_N^+(D)=\sum_{Q\in \mathcal Q_{D,N}/\G} e_{\G,\alpha_Q},$$
respectively.
Here $1/e_{\G,z}$ is the cardinality of $
\G_{z}/\{\pm1\}$ for each $z \in \H$, where $\G_{z}$ denotes the stabilizer of $z$ in $\G$. 
 For each cusp $\frak s$ in $S:=S_N-\{0,\i \}$, define $k_\frak s:=(v,N/v)$ if $\frak s$ is represented by a rational number $e/v$ such that $v$ is a divisor of $N$ and $e$ is coprime to $v$. Then for each positive integer $\ell$ such that $-\ell$ is congruent to a square modulo $4N$, the following holds: 
\begin{align*}
f(\t):= &
\prod_{\frak s\in S } (j_N^+(\t)-j_N^+(\frak s))^{-\frac12k_{\frak s}H(\ell/k_{\frak s}^2) }
\prod_{Q\in \mathcal Q_{\ell,N}/\G}(j_N^+(\t)-j_N^+(\alpha_Q))^{e_{\G,\alpha_Q}} \\
= &q^{-H_N^+(\ell)+\sum_{\frak s\in S}\frac12k_{\frak s}H(\ell/k_{\frak s}^2)}\prod_{\nu=1}^\i(1-q^\nu)^{-B(\nu^2,\ell)},
\end{align*}
where the class number $H(\ell/k_{\frak s}^2)$ is defined to be zero if $k_{\frak s}^2\nmid \ell$ and $B(\nu^2,\ell)$ is the coefficient of $q^\ell$ in a weakly holomorphic modular form of weight $3/2$ for $\G_0(4N)$ given in \cite[Eq.(12)]{CK}. Utilizing Theorem \ref{d} with some power $f^m$ of $f$ (we choose a suitable $m$ in order to have integer exponents in the definition of $f$), we obtain
$$
{\rm MT}(D, \f_{N,n})=-\sum_{d|n} B(d^2,\ell)d+\frac 1m \ve_{f^m}(n).
$$

Meanwhile, according to the theory of theta lifting developed by Bruinier and Funke \cite{BF},
${\rm MT}(D, \f_{N,n})$ can also be interpreted as a coefficient of $q^D$ of a certain harmonic weak Maass form of weight $3/2$. 
\end{exam}

\end{document}